\newtheorem{theorem}{Theorem}[section]
\newtheorem{lemma}[theorem]{Lemma}
\newtheorem{corollary}{Corollary}[theorem]
\theoremstyle{definition}
\newtheorem{defn}[theorem]{Definition}
\title{Minors of asymptotically almost all sparse paving matroids}
\author{Will Critchlow}
\address{School of Mathematics and Statistics, Victoria University of Wellington,
New Zealand}
\email{william.critchlow@msor.vuw.ac.nz}
\begin{document}

\maketitle

\begin{abstract}
We use counting arguments to show that asymptotically almost all sparse paving
matroids contain an $H$-minor, where $H$ falls into one of several simple classes of
matroids. Furthermore the result holds for all $H$ in a larger class of matroids, if
we restrict to asymptotically almost all sparse paving matroids of fixed rank $r$
(where $r$ is necessarily no smaller than the rank of $H$).
\end{abstract}

\section{Introduction}
Letting $\mathbb{M}_n$ denote the set of matroids with groundset $[n]$, we say that
a property $\mathcal{P}$ holds for asymptotically almost all matroids if

$$\lim\limits_{n \rightarrow \infty} \left.\frac{\#\{M \in \mathbb{M}_n: M \text{
has property }\mathcal{P}\}}{\#\mathbb{M}_n} \right. = 1$$

In \cite{PvdP}, Pendavingh and van der Pol showed that asymptotically almost all
matroids contains a $H$-minor, for $H$ one of the matroids $U_{2,k},
U_{3,6},P_6,Q_6$ or $R_6$. 

Here we consider the related question of for which sparse paving matroids $H$ we can
say that asymptotically almost all sparse paving matroids contain an $H$-minor.
Interest in this problem is largely motivated by the conjecture of Mayhew, Newman,
Welsh and Whittle, in \cite{MNWW}, that asymptotically almost all matroids are
sparse paving. If true, the conjecture would imply that if asymptotically almost all
sparse paving matroids have an $H$-minor, then so must asymptotically almost all
matroids.

\subsection{Summary of results}

In this paper we show that a number of matroids are contained as minors in
asymptotically almost all sparse paving matroids. These include: \\

\begin{itemize}
\item Uniform matroids
\item Sparse paving matroids of rank $r$ whose non-bases all intersect in a single set
of $r-2$
elements.
\item Sparse paving matroids whose non-bases are pairwise disjoint
\item $W^3$
\end{itemize}

We note that the result could surely be extended to various other matroids with simple
structures of non-bases, but for reasons of space we shall only explicitly prove the
four above cases.

Furthermore, we show some matroids to be contained as minors in asymptotically
almost all sparse paving matroids of fixed rank $r$ (where obviously $r$ must be at
least the rank of the target minor). This is a weaker statement but we can prove it
for more target minors. Specifically, the statement holds for all sparse paving
matroids whose non-bases all contain at least one loose element, that is to say an
element not contained in any other non-bases.

In building the apparatus to prove the above results, we additionally show that
asymptotically almost all sparse paving matroids $M \in \mathbb{M}_n$ have at least
$(1+o(1))\frac{1}{4n}{n \choose r(M)}$ non-bases.

\section{Notation and preliminaries}

\subsection{Notation}

As is common, we write $\mathbb{M}_n$ to represent the class of matroids of size
$n$, and $\mathbb{M}_{n,r}$ the class of matroids of size $n$ and rank $r$. We shall
also use $m_n =
|\mathbb{M}_n|$ and $m_{n,r} = |\mathbb{M}_{n,r}|$

A matroid $M \in \mathbb{M}_{n,r}$ is \textit{sparse paving} if every $r$-set of
$[n]$ that is not a basis of $M$, is a circuit-hyperplane of $M$. We write
$\mathbb{S}_n$ for the class of sparse paving matroids of size $n$, and
$\mathbb{S}_{n,r}$ the class of sparse paving matroids of size $n$ and rank $r$.
$s_n = |\mathbb{S}_n|$ and $s_{n,r} = |\mathbb{S}_{n,r}|$.

We use the term \textit{non-basis} to describe a circuit of cardinality equal to the
rank of its matroid. Since this paper discusses non-bases exclusively in the context of
sparse paving matroids, the definition is here also equivalent to
\textit{circuit-hyperplane} (which we shall try to avoid using).

A property $\mathcal{P}$ is said to hold for \textit{asymptotically almost all} (or
a.a.a.) matroids if

$$\lim\limits_{n \rightarrow \infty} \left.\frac{\#\{M \in \mathbb{M}_n: M \text{
has property }\mathcal{P}\}}{m_n} \right. = 1$$

with similar definitions holding amongst sparse paving matroids, matroids of fixed
rank and sparse paving matroids of fixed rank. Equivalently, the property
$\mathcal{P}$ may be said to hold \textit{with high probability} (or w.h.p.).

We write $[n]$ for $\{1, 2, \ldots, n\}$ and $[n]^{(r)}$ means the $r$-subsets of
$[n]$.

\subsection{Preliminaries}

We note the following result which is fundamental to many of our proofs. \\

\begin{lemma}
For all $0 < \delta \le 1$, asymptotically almost all $n$-element sparse paving
matroids have rank $r$ in the range $((0.5-\delta)n,(0.5+\delta)n)$
\label{aaaranges}
\end{lemma}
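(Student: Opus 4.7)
The plan is to exploit the fact that $\binom{n}{r}$, which governs both sides of the count of sparse paving matroids of rank $r$, is sharply peaked at $r = n/2$. Concretely I would combine the easy upper bound $s_{n,r} \le 2^{\binom{n}{r}}$ — a sparse paving matroid of rank $r$ is determined by choosing its non-basis family as a subset of $[n]^{(r)}$ — with a Knuth-style lower bound of the form $s_{n,\lfloor n/2 \rfloor} \ge 2^{\binom{n}{\lfloor n/2 \rfloor}/q(n)}$ for some polynomial $q$, arising from a large independent set in the Johnson graph $J(n, \lfloor n/2 \rfloor)$ (whose maximum degree is $O(n^2)$, so a greedy colouring gives an independent set of that proportional size, every subset of which is a valid non-basis family).

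The entropy estimate $\binom{n}{r} \le 2^{n H(r/n)}$ then does the remaining work. Since $H$ is strictly concave and maximised at $1/2$, we have $H(0.5 - \delta) < 1$ whenever $\delta > 0$, so for any \emph{bad} rank (one with $|r - n/2| \ge \delta n$) we get $\binom{n}{r} \le 2^{n H(0.5 - \delta)}$, which is exponentially smaller in $n$ than $\binom{n}{\lfloor n/2 \rfloor}/q(n) = \Theta(2^n/n^{O(1)})$. Consequently
$$\frac{\#\{M \in \mathbb{S}_n : |r(M) - n/2| \ge \delta n\}}{s_n} \;\le\; \frac{(n+1) \cdot 2^{2^{nH(0.5-\delta)}}}{2^{\Omega(2^n/\mathrm{poly}(n))}} \;\longrightarrow\; 0,$$
in fact doubly-exponentially fast, which proves the lemma.

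Modulo the Knuth-style lower bound (which is standard in the matroid-counting literature and which I would simply cite), the entire argument reduces to routine entropy estimates for binomial coefficients, so I do not anticipate any real obstacle — the peakedness of $\binom{n}{r}$ does all the heavy lifting, and the doubly-exponential gap is so enormous that very crude bounds on either side suffice.
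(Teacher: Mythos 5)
Your proposal is correct and follows essentially the same route as the paper: bound the number of bad-rank sparse paving matroids above by $(n+1) \cdot 2^{\binom{n}{\lfloor(0.5-\delta)n\rfloor}}$, bound $s_n$ below by a Graham--Sloane-type estimate $2^{\binom{n}{\lfloor n/2\rfloor}/\mathrm{poly}(n)}$ coming from a large stable set in the Johnson graph, and observe that the gap between the two binomial coefficients in the exponents yields a doubly-exponential ratio. The only cosmetic difference is that you route the comparison through the entropy inequality $\binom{n}{r}\le 2^{nH(r/n)}$, whereas the paper compares the two binomial coefficients directly to obtain the factor $n\left(\frac{1-\delta}{1+\delta}\right)^{\delta n/2}$.
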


A similar result already exists in the general matroidal case, and is included in
\cite{LOSW}.

\begin{proof}

Let $t_n$ be the number of $n$-element sparse paving matroids with rank outside
this range. This is bounded above by 

$$t_n < n\left(2^{{n \choose \lfloor(0.5-\delta)n\rfloor}}\right)$$

And we know from a result of Graham and Sloane \cite{GS} that

$$s_n > 2^{\frac{1}{n}{n \choose \lfloor n/2 \rfloor}}$$

The latter can easily be seen to outgrow the former - for example, by considering
binomial coefficients we get $$\frac{\log t_n}{\log s_n} = (1 + o(1)) \frac{{n
\choose \lfloor(0.5-\delta)n\rfloor}}{\frac{1}{n}{n \choose \lfloor n/2 \rfloor}} 
\le(1+o(1)) n\left(\frac{1-\delta}{1+\delta}\right)^{\delta n/2} $$

\end{proof}

\section{The Johnson Graph and Maximum extensions}

The main objective of this section is a proof of the result that asymptotically
almost all sparse paving matroids have at least $(1+o(1))\frac{1}{4n}{n \choose
r(M)}$ non-bases.

As observed in \cite{PvdP} the collections of non-bases of sparse paving matroids of
size $n$ and rank $r$ are in direct correspondence with the stable sets of the
Johnson graph $J(n,r)$. This was originally shown in a paper of Piff and Welsh
\cite{PiffW}. We shall find it useful to work in these terms for now.

\begin{defn} [Johnson graph]
The Johnson graph $J(n,r)$ has as its vertex set $[n]^{(r)}$. Vertices are joined if
and only if they intersect in exactly $r-1$ elements.
\end{defn}

A simple counting argument gives rise to the following bound on the size of a
stable set in $J(n,r)$.\\

\begin{lemma}
\label{maxstablelem}
A stable set in $J(n,r)$ has size at most $\frac{1}{n+1-r}{n \choose r}$.
\end{lemma}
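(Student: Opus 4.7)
The plan is to prove the bound by a short double-counting argument on incidences between the $r$-sets of a stable set $S \subseteq [n]^{(r)}$ of $J(n,r)$ and the $(r-1)$-subsets of $[n]$. Specifically, I would count pairs $(A, B)$ with $A \in S$, $B \in [n]^{(r-1)}$ and $B \subset A$.

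Counting by $A$ is immediate: each $A \in S$ contains exactly $r$ subsets of size $r-1$, giving $r|S|$ pairs in total. The key step is counting by $B$. Suppose two distinct members $A_1, A_2 \in S$ both contain the same $(r-1)$-set $B$; then $|A_1 \cap A_2| \ge r-1$, and since $A_1 \ne A_2$ are both of size $r$, the intersection has size exactly $r-1$. But this means $A_1$ and $A_2$ are adjacent in $J(n,r)$, contradicting the assumption that $S$ is stable. Hence each $(r-1)$-subset of $[n]$ is contained in at most one member of $S$, so the total number of pairs is at most $\binom{n}{r-1}$.

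Combining the two counts gives $r|S| \le \binom{n}{r-1}$, and rewriting via the identity $\binom{n}{r-1} = \frac{r}{n-r+1}\binom{n}{r}$ yields $|S| \le \frac{1}{n+1-r}\binom{n}{r}$, as desired.

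I do not expect any real obstacle here; the argument is essentially a one-line incidence count, and the only point requiring any thought is the observation that two distinct $r$-sets sharing an $(r-1)$-subset are automatically adjacent in $J(n,r)$, which is forced by the size constraint. One could equivalently work with $(r+1)$-supersets of elements of $S$, but that route gives the bound $\frac{1}{r+1}\binom{n}{r}$, not the form stated in the lemma, so I would stick with the $(r-1)$-subset version.
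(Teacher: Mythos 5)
Your double-counting argument is correct and is exactly the ``simple counting argument'' the paper alludes to without spelling out: count incidences $(A,B)$ with $A \in S$ and $B \in A^{(r-1)}$, note each $(r-1)$-set lies in at most one member of a stable set, and apply the identity $\binom{n}{r-1} = \frac{r}{n-r+1}\binom{n}{r}$. Your side remark about the $(r+1)$-superset variant giving $\frac{1}{r+1}\binom{n}{r}$ is also accurate and simply reflects the isomorphism $J(n,r) \cong J(n,n-r)$; the shadow version is the one that yields the form stated in the lemma.
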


Equality is achieved by the Steiner system $S(n,r,r-1)$, where it exists.

\begin{defn} [Maximal extension]
Given a stable set $I$ in a graph $G$, say $J$ is an \textit{extension} of $I$
if $J$ is
stable, and $I$ is contained in $J$. An extension $J$ of $I$ is called
\textit{maximal} if
there is no larger stable set containing $I$ (note that $I$ may have more than one
maximal extension under this definition). \\
\end{defn} 

\begin{theorem}
\label{extthm1}
\begin{itemize}
\item[(a)]Let $I$ be drawn uniformly at random from the stable sets of $J(n,r)$. For
any $\epsilon, \delta >0$ there is some $n_0$ large enough such that 
for any $n > n_0$ and $2 \le r \le n-2$, $I$ will have a
maximal extension of cardinality at least
$\frac{1}{(1+\epsilon)2n}{n \choose r}$, with probability at least $1-\delta$.
\item[(b)] Let $M$ be drawn uniformly at random from the sparse paving matroids with
groundset $[n]$. Asymptotically almost
always the non-bases of $M$, viewed as a stable set in the Johnson graph
J(n,r(M)), have a maximal extension of cardinality at least
$\frac{1}{(1+\epsilon)2n}{n \choose r}$.
\end{itemize}
\end{theorem}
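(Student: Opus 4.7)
Part (b) will follow from part (a) by standard reductions: by the Piff--Welsh correspondence, sparse paving matroids of rank $r$ on $[n]$ are in bijection with stable sets of $J(n,r)$, and by Lemma~\ref{aaaranges} the rank of a uniformly random sparse paving matroid concentrates in a short window around $n/2$, on which part (a) applies uniformly. So the substance lies in part (a).

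For part (a) I would exploit the Graham--Sloane partition of $V(J(n,r)) = [n]^{(r)}$ into the $n$ disjoint stable sets $I_0, \ldots, I_{n-1}$, where $I_v$ consists of the $r$-subsets summing to $v \pmod{n}$; the average size of these sets is $\binom{n}{r}/n$. The crucial observation is that, for any stable set $I$ in $J(n,r)$ and any coset index $v$, the set $I \cup (I_v \setminus N[I])$ is still stable, because $I_v$ is itself stable and $I_v \setminus N[I]$ has no edges to $I$. Completing this enlarged stable set to a maximal one gives a maximal extension of $I$ whose size is at least $|I| + |I_v \setminus N[I]|$. Averaging over $v$, and using $\sum_v |I_v \setminus N[I]| = \binom{n}{r} - |N[I]|$, this yields the key inequality
$$\text{size of some maximal extension of } I \ \ge\ |I| + \frac{\binom{n}{r} - |N[I]|}{n}.$$

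The next step is to identify which stable sets make this lower bound exceed $K = \binom{n}{r}/(2(1+\epsilon)n)$. Substituting the trivial neighbourhood bound $|N[I]| \le |I|(1+r(n-r))$ and using $r \approx n/2$ (as allowed by Lemma~\ref{aaaranges}), elementary algebra gives the conclusion whenever $|I|$ is at most a threshold of order $\binom{n}{r}/n^2$; and it is trivially fine whenever $|I| \ge K$, since then $I$ is contained in any of its maximal extensions. The stable sets that need more care are those whose size lies in the intermediate range between these two regimes: for such $I$ one must argue that $|N[I]|$ is typically much smaller than its worst-case estimate $|I|(1+r(n-r))$, equivalently that typical stable sets in the Johnson graph are spread out rather than clustered.

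\textbf{Main obstacle.} The heart of the proof is the counting/concentration step controlling the intermediate range. A naive cardinality bound $\binom{\binom{n}{r}}{k}$ on stable sets of size $k \approx K$ vastly exceeds the Graham--Sloane lower bound $s_{n,r} \ge 2^{\binom{n}{r}/n}$, so the stability constraint itself must be used to cut the count down. The most plausible approaches are either a switching argument pairing each clustered stable set with a large family of spread-out stable sets of comparable size, or a moment estimate on $|N[I]|$ over a uniformly random stable set, exploiting the vertex-transitivity of $J(n,r)$ to handle the correlations.
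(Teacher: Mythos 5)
Your approach is genuinely different from the paper's, and it has a gap you yourself flag. The averaging inequality from the Graham--Sloane partition is correct and tidy: some maximal extension of $I$ has size at least $|I| + \bigl(\binom{n}{r} - |N[I]|\bigr)/n$. But this settles the theorem only in the two extremes $|I|\ge K$ (trivially) and $|I|\lesssim\binom{n}{r}/n^2$ (by the crude degree bound $|N[I]|\le |I|(1+r(n-r))$). In the intermediate regime --- a multiplicative window of width $\Theta(n)$ --- you have nothing, and that regime is precisely where the probability mass of $|I|$ is expected to lie, since typical stable sets have size $\Theta\bigl(\binom{n}{r}/n\bigr)$. Your ``main obstacle'' paragraph admits this; the switching or second-moment arguments you gesture at would be the actual proof, and they are not supplied. (A secondary issue: for part (a) you cannot invoke $r\approx n/2$. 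Part (a) is a uniform statement over $2\le r\le n-2$; Lemma~\ref{aaaranges} only assists with the reduction from (b) to (a), not within (a).)

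The paper avoids the concentration question about $|N[I]|$ entirely. A stable set all of whose maximal extensions have size less than $K$ must be a subset of some maximal stable set of size less than $K$. Byskov's theorem bounds the number of maximal stable sets of size $k$ in an $N$-vertex graph by $\lfloor N/k+1\rfloor^{k}$, which for $N=\binom{n}{r}$ and $k<K$ is at most $((1+\epsilon)2n)^{K}<4n^{K}$; each such maximal set contributes at most $2^{K}$ stable subsets, so the bad stable sets number at most $8n^{K+1}$. Since $K<\tfrac{1}{2n}\binom{n}{r}$, this is exponentially smaller than the Graham--Sloane lower bound $2^{\binom{n}{r}/(2n)}$ on the total number of stable sets, and the conclusion follows. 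To repair your plan you would effectively need a counting argument of this type anyway to handle the intermediate regime, at which point the detour through the partition classes and the bound on $|N[I]|$ becomes superfluous.
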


The proof proceeds by a simple counting argument, for which we first require the
following lemma. This is based on a result of Byskov \cite{Bys} - for brevity we
present only a corollary of his result.\\

\begin{lemma}
The number of maximal stable sets of size $k$ in any $N$-vertex graph is at
most $$\lfloor N/k \rfloor^{k-\alpha}\lfloor N/k+1\rfloor^{\alpha}$$
where $\alpha = N \mod{k}$
\end{lemma}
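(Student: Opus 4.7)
My plan is to show that the extremal case for this counting problem is a disjoint union of cliques, and then apply AM--GM to conclude. Let $M_k(G)$ denote the number of maximal stable sets of size $k$ in an $N$-vertex graph $G$.

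First, I would verify the extremal case directly. If $G = K_{n_1} \sqcup \cdots \sqcup K_{n_k}$ is a disjoint union of $k$ cliques with $n_1 + \cdots + n_k = N$, then a stable set is maximal of size $k$ if and only if it selects exactly one vertex from each $K_{n_i}$, giving $M_k(G) = n_1 n_2 \cdots n_k$. Subject to $\sum n_i = N$ with positive integer $n_i$, this product is maximized (by the integer form of AM--GM) precisely when the $n_i$ differ by at most one, i.e.\ when $\alpha = N \bmod k$ of them equal $\lfloor N/k \rfloor + 1$ and the remaining $k-\alpha$ equal $\lfloor N/k \rfloor$, matching the bound stated in the lemma.

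Second, I would argue that no graph $G$ gives a larger value of $M_k(G)$ than this extremal case. The underlying structural fact is that maximality forces $S = \{v_1,\dots,v_k\}$ to be a dominating set, so $\{N[v_i]\}$ covers $V(G)$ and every $S$ induces a partition $V = V_1 \sqcup \cdots \sqcup V_k$ with $v_i \in V_i \subseteq N[v_i]$. The standard way to exploit this is an exchange (symmetrisation) argument: if $G$ is not already a disjoint union of cliques, there exist vertices $u,v,w$ with $uv, vw \in E(G)$ but $uw \notin E(G)$, and one shows that adding or removing a suitable edge in this local configuration does not decrease $M_k(G)$ while strictly reducing the count of such induced paths. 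Iterating terminates at a disjoint union of cliques, to which the first step applies.

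The principal obstacle is the exchange step: one has to track carefully which stable sets remain stable after the edge modification and, more subtly, which gain or lose maximality, and then verify that the net change in $M_k$ is non-negative. Getting the direction of modification right (adding versus deleting) in each local configuration is the delicate part; this is essentially where Byskov's original argument does its real work. An alternative would be induction on $N$ by branching on whether a fixed low-degree vertex lies in $S$, but matching the tight bound $\lfloor N/k \rfloor^{k-\alpha}(\lfloor N/k \rfloor + 1)^{\alpha}$ inductively requires carrying delicate arithmetic on the residue $\alpha = N \bmod k$ through the recursion, and so is not obviously easier.
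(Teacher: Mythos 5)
The paper itself offers no proof of this lemma: it is quoted as a corollary of a theorem of Byskov \cite{Bys}, so there is no internal argument to compare against, and your proposal must stand on its own. Your first step is fine: for a disjoint union of $k$ cliques of sizes $n_1,\ldots,n_k$, every maximal stable set picks exactly one vertex from each clique, so there are $n_1\cdots n_k$ of them and all have size exactly $k$; integer AM--GM then gives the claimed value when the sizes are as equal as possible. The genuine gap is the second step, which you yourself flag as ``where the real work is done'' and then leave open. The assertion that for any induced path $u\text{--}v\text{--}w$ one can add or delete a ``suitable'' edge without decreasing $M_k$ is not obvious and is the entire content of the lemma; as stated it is only a hope, not a proof.

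To see that the exchange step is genuinely delicate, consider $G=P_3$ on $\{u,v,w\}$ with edges $uv,vw$. Here $M_1(G)=M_2(G)=1$. Adding $uw$ gives $K_3$ with $M_1=3$ but $M_2=0$; deleting $uv$ gives $K_2\sqcup K_1$ with $M_2=2$ but $M_1=0$. So neither modification is monotone in $k$, and the correct choice of ``suitable'' depends on $k$ and the surrounding graph in a way you would have to analyse case by case. Moreover the termination claim is also shaky: adding $uw$ destroys the path $u\text{--}v\text{--}w$ but can create fresh induced paths through $u$ and $w$ (any $x$ with $xu\in E$, $xw\notin E$ now yields $x\text{--}u\text{--}w$), so the count of induced $P_3$'s need not strictly decrease. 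The established route to this bound --- Moon--Moser and its refinements, and Byskov's result in particular --- is the inductive branching on a minimum-degree vertex that you mention and then set aside; that is the argument that actually has to be carried out, and the residue bookkeeping you worry about is precisely the content of the proof, not a reason to avoid it.
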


(We use ``N'' for the number of vertices in the graph is to avoid confusion with
other uses of $n$: in the Johnson graph $J(n,r)$, the number of vertices is $N={n
\choose r}$.)

\begin{proof}[Proof of Theorem~\ref{extthm1}]
We show part (a). 
W.l.o.g. we can assume $\epsilon \le 1$.

We shall use the notation $K = \frac{1}{(1+\epsilon)2n}{n \choose r}$. 
Say $k'$ is the largest integer less than $K$.
Use $N = {n \choose r}$, and let $\mu_{n,r,k'}$ be the number of maximal stable
sets of size $k'$ on $J(n,r)$. By the above lemma we have  $$\mu_{n,r,k'} < \lfloor
N/k'+1\rfloor^{k'} < ((1+\epsilon)2n)^{K}  <
4n^{K}$$ with the last inequality using our
assumption that $\epsilon < 1$. Moreover, each of these maximal stable sets
contains $2^k$ subsets which are stable in $J(n,r)$. 
Now the value of the expression $$\lfloor N/k+1\rfloor^{k}$$ is an increasing
function for $k < N$, so we also have $$\mu_{n,r,k} < \lfloor N/k'+1\rfloor^{k'} <
4n^{K} $$ for any $k < k'$. \\

Now let $\mathcal{I}_{\epsilon}$ be the collection of stable sets of $J(n,r)$
whose maximal extension has cardinality less than $\frac{1}{(1+\epsilon)2n}{n
\choose r}$. Clearly each of these is a subset of one of the maximal stable
sets under consideration. We have         

\begin{multline}
|\mathcal{I}_{\epsilon}| < \sum\limits_{k = 1}^{\frac{1}{(1+\epsilon)2n}{n \choose
r}}4n^{K} 2^k = 4n^{\frac{1}{(1+\epsilon)2n}{n
\choose r}}.\sum\limits_{k =1}^{K} 2^k  =
4n^{K}.2^{\frac{1}{(1+\epsilon)2n}{n \choose
r}+1}  \\ 
\le 8n^{K+1}
\end{multline}

But for sufficiently large $n$, and assuming $2 \le r \le n-2$, we have
$K+1 <<
\frac{1}{2n}{n \choose r} - \log_2 8n$ - in fact, we can force
$K+1 < \frac{1}{2n}{n \choose r} - 2\log_2 8n$ ,
so$ |\mathcal{I}_{\epsilon}|< \frac{1}{8n}\left(2^{\frac{1}{2n}{n \choose
r}}\right)$. But we know that the number of
stable sets in $J(n,r)$ is at least $2^{\frac{1}{2n}{n \choose r}}$ \cite{GS}.
Now we
simply require $n >\frac{1}{8\delta}$ and we are done.
Part (b) of the result follows as a simple corollary of the above and Lemma
\ref{aaaranges}.
\end{proof}

\begin{corollary}
\label{extcor1}
\begin{itemize}
\item[(a)]Asymptotically almost all sparse paving matroids $M$ of rank $r$ have at
least $(1+o(1))\frac{1}{4n}{n \choose r}$ non-bases.
\item[(b)]Asymptotically almost all sparse paving matroids $M$ have at least
$(1+o(1))\frac{1}{4n}{n \choose r(M)}$ non-bases.
\end{itemize}
\end{corollary}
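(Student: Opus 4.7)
The plan is to deduce the corollary from Theorem~\ref{extthm1} by an additional counting step. For part (b), I would first apply Lemma~\ref{aaaranges} to restrict to ranks in a narrow band around $n/2$ and union-bound over the linearly many ranks there, losing only a polynomial factor; this reduces part (b) to part (a).

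For part (a), fix $\epsilon > 0$ and set $K = \frac{1}{(1+\epsilon)2n}\binom{n}{r}$. Theorem~\ref{extthm1}(a) already rules out the vanishing fraction of stable sets whose maximal extensions all have size below $K$, so I only need to bound the number of stable sets $I \subseteq J(n,r)$ with $|I| < K/2$ and some maximal extension $J$ of size $\geq K$. Every such $I$ sits inside a maximal stable set $J$ of size $k \in [K, \tfrac{1}{n+1-r}\binom{n}{r}]$ (the upper limit coming from Lemma~\ref{maxstablelem}), so the count of bad $I$ is at most
$$\sum_{k \geq K} \mu_{n,r,k} \binom{k}{< K/2}.$$

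To bound this sum I would split at a threshold, say $k_0 = 2K$. Below $k_0$ the binomial coefficient is only bounded by $2^{k-1}$, and one must lean on Byskov's bound to control $\mu_{n,r,k}$; the resulting estimate mirrors the calculation already carried out in the proof of Theorem~\ref{extthm1}, so this part of the sum is of the same order as the $8n^{K+1}$ quantity handled there. Above $k_0$ the binary-entropy bound $\binom{k}{<K/2} \leq 2^{kH(K/(2k))}$ supplies exponential savings which compensate for the weaker Byskov bound at larger $k$. Together these estimates show the bad count is again dominated by the Graham--Sloane lower bound $2^{\binom{n}{r}/(2n)}$ on the total number of stable sets, with a vanishing constant.

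The main obstacle is the regime $k \approx K$, where neither the entropy bound nor Byskov's bound is individually tight; in that range the argument essentially repeats the theorem's calculation. The factor-of-two improvement from $\frac{1}{2n}$ in the theorem to $\frac{1}{4n}$ in the corollary is precisely what the bound $\binom{k}{<K/2} \leq 2^{k-1}$ (versus the trivial $2^k$ on all subsets) contributes.
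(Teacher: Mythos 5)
Your proposal takes a genuinely different route from the paper, but it has a gap that the paper's argument is specifically engineered to avoid.

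The paper does \emph{not} bound the count $\sum_{k\geq K}\mu_{n,r,k}\binom{k}{<K/2}$. Instead, it fixes a total order on stable sets, assigns each stable set $I$ a single canonical maximal extension $m'(I)$ (so the $m'$-fibres partition the stable sets), and then shows via Bayes's theorem and the Local LYM inequality that, conditional on $m'(I)=J$, the size $|I|$ stochastically dominates the size of a uniform random subset of $J$. Concentration of the binomial then gives $|I|\geq(1-\epsilon)|J|/2$ with high probability, and Theorem~\ref{extthm1} supplies $|J|\geq K$ with high probability. The Byskov bound plays no role in the corollary's proof.

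The difficulty with your counting bound is twofold. First, a small stable set $I$ may sit inside many maximal stable sets of size $\geq K$, so $\sum_{k\geq K}\mu_{n,r,k}\binom{k}{<K/2}$ is a substantial overcount; without the canonical-extension partition you cannot tighten it. Second, and more seriously, Byskov's bound is a worst-case bound over all $N$-vertex graphs and is far too weak for the Johnson graph in the regime $k\gg K$: the bound $(N/k+1)^k$ grows with $k$ all the way up to the maximum stable set size (which is roughly $4K$ when $r\approx n/2$), so the dominant contribution to your sum comes from the \emph{largest} $k$, not from $k\approx K$. In that regime the entropy factor $2^{kH(K/(2k))}$ does not compensate for $(N/k+1)^k$, and the sum is not visibly $o(s_{n,r})$. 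The paper sidesteps this entirely: by comparing a.a.a.\ stable sets to uniform subsets of their own canonical extension, it never needs to count maximal stable sets at all. The LYM step is precisely what licenses that comparison, by showing $\mathrm{Pr}_J(m'(I)=J\mid |I|=r)$ is nondecreasing in $r$; dropping LYM and replacing it with $\binom{k}{<K/2}\leq 2^{k-1}$ is not an equivalent move.

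Finally, your attribution of the factor-of-two to $\binom{k}{<K/2}\leq 2^{k-1}$ is not where it actually comes from. The passage from $\frac{1}{2n}\binom{n}{r}$ (size of the maximal extension) to $\frac{1}{4n}\binom{n}{r}$ (size of the stable set itself) comes from the fact that a typical subset of a set of size $K$ has size concentrated around $K/2$; the factor-of-two saving in the binomial coefficient is a constant-factor saving in a \emph{count}, which does not translate into halving a \emph{threshold}. Your reduction of part (b) to part (a) via a union bound over ranks is fine and a little more explicit than the paper's monotonicity remark, but the core of part (a) needs the Local LYM argument to close.
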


The corollary despite seeming obvious requires a little effort to obtain, and the
assistance of the following well-known theorem of Lubell, Yamamoto and Meshalkin.

\begin{defn}[Shadow]
For $\mathcal{A} \subseteq X^{(r)}$, the \textit{shadow} $\partial\mathcal{A}$ of
$\mathcal{A}$ is the set system $\{B \in X^{(r-1)}: B \cup \{i\} \in \mathcal{A},
\text{ for some } i \not\in B\}$. 
\end{defn}

\begin{theorem}[Local LYM]
For any set $X$ and $\mathcal{A} \subseteq X^{(r)}$, 

$$\frac{|\partial\mathcal{A}|}{{n \choose r-1}} \ge \frac{|\mathcal{A}|}{{n \choose
r}}$$
\end{theorem}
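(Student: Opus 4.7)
The plan is to prove this classical ``Local LYM'' inequality by a standard double counting of the incidence relation between $\mathcal{A}$ and its shadow. Taking $|X| = n$ (as the binomial coefficients in the statement tacitly assume), I would introduce the set of incident pairs
$$P = \{(A, B) : A \in \mathcal{A},\ B \subset A,\ |B| = r-1\}$$
and evaluate $|P|$ in two different ways.

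Counting from the $\mathcal{A}$ side is immediate: each $A \in \mathcal{A}$ has exactly $r$ subsets of cardinality $r-1$, and by the definition of the shadow every one of them lies in $\partial\mathcal{A}$. Hence $|P| = r|\mathcal{A}|$. Counting from the $\partial\mathcal{A}$ side, a given $B \in \partial\mathcal{A}$ can only appear in a pair $(A,B)$ when $A = B \cup \{i\}$ for some $i \in X \setminus B$, and there are exactly $n - r + 1$ such elements $i$; so $B$ contributes at most $n - r + 1$ pairs, giving $|P| \le (n - r + 1)|\partial\mathcal{A}|$. Combining the two counts yields
$$r|\mathcal{A}| \;\le\; (n - r + 1)|\partial\mathcal{A}|.$$

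The standard identity $\binom{n}{r} = \frac{n-r+1}{r}\binom{n}{r-1}$ then converts this directly into $|\partial\mathcal{A}|/\binom{n}{r-1} \ge |\mathcal{A}|/\binom{n}{r}$, as required. There is no real obstacle here: the only place an inequality (rather than an equality) appears is in the second count, reflecting the fact that an $(r-1)$-set in the shadow need not have all of its $n-r+1$ possible $r$-set extensions lying in $\mathcal{A}$. Equality holds, for instance, when $\mathcal{A} = X^{(r)}$, which is a useful sanity check on the constants.
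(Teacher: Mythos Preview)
Your proof is correct and is the standard double-counting argument for the Local LYM inequality. The paper itself does not supply a proof of this statement at all: it introduces it as ``the following well-known theorem of Lubell, Yamamoto and Meshalkin'' and immediately uses it as a black box in the proof of Corollary~\ref{extcor1}. So there is nothing to compare against; you have filled in a gap the paper deliberately left, and done so with exactly the argument one would expect.
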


We can now tackle our corollary:

\begin{proof}[Proof of Corollary~\ref{extcor1} ] 

We start with part (a).

Let $\epsilon > 0$. For the sake of simplicity we continue to view $\mathcal{C}(M)$
as a stable set in the Johnson Graph.

As noted previously, a stable set may have more than one maximal extension. It now
becomes useful to identify each stable set with precisely one maximal extension,
such that these extensions define a partition on the space stable sets. To achieve
this we define some total ordering relation $<'$ on all stable sets of $J(n,r)$ with
the property that $|J| > |I| \implies J >' I$.\footnote{e.g. let $>_k'$ be a
lexicographical ordering on sets of size $k$, and say $J >' I$ if $|J| > |I|$ or if
$|J| = |I| = k$ and $J >_k' I$.}

 Let $m'(I)$ denote the extension of a stable
set $I \subset J(n,r)$ which is maximal under $<'$. 

We note the following useful property of this definition:

\begin{equation}
\label{k7zK}
I \subseteq I' \subseteq m'(I) \implies m'(I')=m'(I)
\end{equation}

Suppose $M$ is chosen uniformly at random from the $n$-element sparse paving
matroids of
rank $r$, and let $I=\mathcal{C}(M)$. Effectively then we have drawn $I$ uniformly
at random from the stable sets of $J(n,r)$. Letting $Y$ be the collection of all
maximal
stable sets in the Johnson graph $J(n,r)$, and recalling that the unique maximal
extensions partition the space of stable sets, we can say:

$$\mathrm{Pr}\left(|I|<(1-\epsilon)\frac{1}{4n}{n \choose r}\right) = \sum\limits_{J
\in Y}\mathrm{Pr}\left(m'(I) =J\right)\mathrm{Pr}\left(|I|<(1-\epsilon)\frac{1}{4n}{n
\choose r}: m'(I)=J\right)$$

We claim that  $\forall J \in Y$, $\mathrm{Pr}(|I|<(1-\epsilon)\frac{1}{4n}{n
\choose r}) : m'(I)=J)
\rightarrow 0$ as $n$ grows. We do this by working in a different probability space:
let $\mathrm{Pr}_J(X(I))$ denote the probability that a statement $X(I)$ is true for
$I$ drawn uniformly at random from the subsets of a stable set $J$. Of course
$\mathrm{Pr}(|I|<(1-\epsilon)\frac{1}{4n}{n \choose r}) : m'(I)=J) = 
\mathrm{Pr}_J(|I|<(1-\epsilon)\frac{1}{4n}{n \choose r}) : m'(I)=J)$, since $m'(I) =
J \implies I \subset J$

Now Bayes's Theorem gives us

\begin{multline}
\label{6YW3}
\mathrm{Pr}\left(|I|<(1-\epsilon)\frac{1}{4n}{n \choose r}) : m'(I)=J\right) \\
=\mathrm{Pr}_J\left(|I|<(1-\epsilon)\frac{1}{4n}{n \choose r}) : m'(I)=J\right) \\
= \frac{\mathrm{Pr}_J(|I|<(1-\epsilon)\frac{1}{4n}{n \choose
r})\mathrm{Pr}_J(m'(I)=J:|I|<(1-\epsilon)\frac{1}{4n}{n \choose
r})}{\mathrm{Pr}_J(m'(I)=J)}
\end{multline}

For a maximal stable set $J$, define $\mathcal{A}_r(J)$ to be the collection of
$r$-subsets of $J$ for which $J$ is \textbf{not} a maximal extension:
$\mathcal{A}_r(J) = \{I \in J^{(r)}: m'(I) \neq J\}$.

\ref{k7zK} tells us that $A_{r-1}(J) \supseteq \partial A_r(J)$.

Applying Local LYM, we see $r < s \implies \frac{\mathcal{A}_r(J)}{{|J| \choose r}} \ge
\frac{\mathcal{A}_s(J)}{{|J| \choose s}}$, which in turn implies that
$\mathrm{Pr}_J(m'(I)=J:|I|=r) \le \mathrm{Pr}_J(m'(I)=J:|I|=s)$. It follows that:

$$\frac{\mathrm{Pr}_J(m'(I)=J:|I|<(1-\epsilon)\frac{1}{4n}{n \choose
r})}{\mathrm{Pr}_J(m'(I)=J)} \le 1$$

Applying this information to \ref{6YW3} tells us that
$$\mathrm{Pr}\left(|I|<(1-\epsilon)\frac{1}{4n}{n \choose r}) : m'(I)=J\right) \le
\mathrm{Pr}_J\left(|I|<(1-\epsilon)\frac{1}{4n}{n \choose r}\right)$$

And it's easily seen by consideration of binomial coefficients that 

$$\mathrm{Pr}_J\left(|I|<(1-\epsilon)\frac{1}{4n}{n \choose r}\right) \rightarrow 0$$

So for any $\delta < 0$, we have that for large enough $n$

$$\mathrm{Pr}\left(|I|<(1-\epsilon)\frac{1}{4n}{n \choose r}) : m'(I)=J\right) <
\delta$$

and so 

$$\mathrm{Pr}\left(|I|<(1-\epsilon)\frac{1}{4n}{n \choose r}\right) <
\delta\sum\limits_{J \in Y}\mathrm{Pr}\left(m'(I)=J\right) = \delta$$

This has shown part (a). Part (b) follows in a similar way to Theorem~\ref{extthm1}:
all bounds in the above proof are tighter the closer $r$ gets to $\frac{n}{2}$, so
the results hold for $M$ drawn from all sparse paving matroids with rank in the
interval $[r,n-r]$, and a.a.a the rank is in that interval. 

\end{proof}

\section{A counting approach to the minor inclusion question}

Theorem~\ref{bigthm} will provide a condition under which a matroid $H$ will
be contained as a minor in a.a.a.\ sparse paving matroids. Essentially we will show
that if the set of non-bases of a matroid contains many subsets isomorphic to the
non-bases of $H$, then almost certainly at least one such instance forms an
$H$-minor (i.e. by having no additional non-bases). \\

A consequence of this is that we can avoid further worrying about inclusion of the
bases of $H$ (or as they might better be
considered in this situation, non-non-bases!)

Before describing this theorem we must introduce some definitions.

\begin{defn}[Line structure]
A \textit{line structure} $L$ of rank $r$ on $n$ elements is defined to be a
collection of sets of cardinality $r$ obeying the rule that $l_1,l_2 \in L \implies
|l_1 \cap l_2| \le r-2$. The collection of non-bases of a sparse paving matroid $M$
forms a line structure, which we may denote by $L(M)$.
Equally every line structure $L$ defines a matroid, $M(L)$,
whose non-bases are isomorphic to $L$, with groundset exactly the union of those
non-bases.

We say a line structure $L_1$ of rank $r$ contains another $L_2$ of rank $r$ (and
write $L_2 \subseteq L_1$) if a subset of $L_1$ is isomorphic to $L_2$. 

\end{defn}

\begin{defn}[Element-disjoint]
Line structures $L_1$ and $L_2$ are \textit{element-disjoint} if the unions of their
elements are disjoint:

$$\left(\bigcup\limits{l \in L_1}\right) \cap \left(\bigcup\limits{l \in L_2}\right)
= \emptyset$$

\end{defn}

For the following theorems we use ``contain'' in the sense of subhypergraphs; in
other words, by ``$M/A$ contains a copy of $L$'' we mean that the non-bases of
$M/A$, viewed as a hypergraph, contains a subhypergraph isomorphic to $L$. We note
that this is \textbf{not} equivalent to $M/A$ containing $L$ as a minor, since $M/A$
may contain more hyperedges on the same elements.

\begin{theorem}
\label{bigthm}
Let $H$ be a fixed sparse paving matroid. Consider drawing $M$ uniformly at random
from $\mathbb{S}_n$. We say that a line structure $L$ is \textit{abundant} in sparse
paving
matroids if, for any $m \in \mathbb{N}$, there is asymptotically almost always a set
$A \subset [n], |A| < r(M)-r_H$ such that $M/A$ contains $m$ element-disjoint
copies of $L(H)$.

If $L(H)$ is abundant in sparse paving matroids, then asymptotically almost all
sparse paving matroids contain $H$ as a minor.
\end{theorem}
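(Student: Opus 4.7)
The plan is to combine the abundance hypothesis with a localised switching argument. Abundance supplies, with probability at least $1-\epsilon/2$ for all large $n$, a valid configuration $(A,S_1,\ldots,S_m,L_1,\ldots,L_m)$: a contraction set $A$, together with element-disjoint subsets $S_1,\ldots,S_m$ of $[n]\setminus A$, each carrying an embedded copy $L_i$ of $L(H)$ among the non-bases of $M/A$. The target is to show that, for $m$ a suitably large constant depending only on $H$ and $\epsilon$, at least one $L_i$ is \emph{clean}, in the sense that it equals the full set of non-bases of $M/A$ contained in $S_i$. A clean $L_i$ yields $(M/A)|S_i\cong H$, and hence the $H$-minor we want.

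Let $n_H=|E(H)|$, let $r_H$ be the rank of $H$, and set $C=\binom{n_H}{r_H}-|L(H)|$, the number of potential ``extras'' on each $S_i$ whose presence would spoil cleanness. Given $\epsilon>0$ I would fix $m$ so that $(1-2^{-C})^m<\epsilon/2$. For each $M$ in the abundance event $E_m$, select the lexicographically smallest valid configuration $c(M)=(A,S_\cdot,L_\cdot)$, ordering first $A$, then $S_\cdot$, and finally the specific copies $L_\cdot$ inside the non-bases of $M/A$ restricted to $S_\cdot$. It then suffices to prove that, for each fixed configuration,
\[
\mathrm{Pr}(c(M)=(A,S_\cdot,L_\cdot)\text{ and every }L_i\text{ unclean}) \le (1-2^{-C})^m\,\mathrm{Pr}(c(M)=(A,S_\cdot,L_\cdot));
\]
summing over all configurations bounds the failure probability by $(1-2^{-C})^m\,\mathrm{Pr}(E_m)\le\epsilon/2$, which together with the $\epsilon/2$ from abundance closes the argument.

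The conditional bound comes from a switching/down-set argument. Fix $(A,S_\cdot,L_\cdot)$ and restrict attention to matroids $M$ with $c(M)=(A,S_\cdot,L_\cdot)$. For such an $M$, encode the non-bases on each $S_i$ by a tuple of ``extras'' $(E_1,\ldots,E_m)$, where $E_i$ is the set of non-bases of $M/A$ in $S_i^{(r_H)}\setminus L_i$; each $E_i$ lies in $2^{P_i}$ for a set $P_i$ with $|P_i|\le C$. Two facts are crucial: (i) the $S_i$ are pairwise disjoint and $r_H\ge 2$, so no $r_H$-set inside $S_i$ can be Johnson-adjacent to any $r_H$-set inside a distinct $S_j$, hence the Johnson-stability constraints do not couple different $S_i$'s with one another; (ii) removing non-bases only shrinks the family of valid configurations of $M$, so if no lex-smaller configuration was valid before a removal, none is valid afterwards. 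Consequently, the allowed set $\mathcal{D}\subseteq\prod_i 2^{P_i}$ of extras-tuples is downward-closed and contains the empty tuple.

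A short induction on $m$ then gives
\[
\frac{|\{(E_1,\ldots,E_m)\in\mathcal{D}:E_i\ne\emptyset\ \forall i\}|}{|\mathcal{D}|}\le (1-2^{-C})^m ,
\]
using only that the cross-section $\mathcal{D}(\emptyset)$ at $E_1=\emptyset$ contains every cross-section $\mathcal{D}(E_1)$ by down-closedness, so $|\mathcal{D}(\emptyset)|\ge 2^{-C}|\mathcal{D}|$. The main obstacle is point (ii) above: verifying that the cleaning operation genuinely preserves the canonical configuration forces a careful choice of the canonical rule --- in particular $L_\cdot$ must be taken as the lex-smallest copy of $L(H)$ inside the non-bases on $S_\cdot$, so that ``spurious'' copies appearing only through extras cannot lex-precede the canonical one --- and establishing this down-set property of $\mathcal{D}$ is the most delicate step of the proof.
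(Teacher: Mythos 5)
Your proposal is essentially correct and follows the same route as the paper's proof (via Theorem~\ref{bigthm2}): canonically fix a contraction set and a family of element-disjoint hosts carrying copies of $L(H)$ via lexicographic tie-breaking, observe that deleting extras on any $S_i$ preserves both sparse-pavingness and the canonical assignment (this is exactly the paper's parent construction, and your downward-closedness of $\mathcal{D}$), and exploit the Johnson-graph decoupling of the disjoint $S_i$ to turn a per-block constant probability of cleanness into the geometric bound $(1-2^{-C})^m$, beating $\epsilon$ for $m$ large. Your down-set/cross-section packaging is a slightly cleaner reformulation of the paper's sequential estimate $\mathrm{Pr}(X_k\mid Z_{k-1})\ge 1/s_{n_H,r_H}$, but the underlying argument is the same.
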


It is easier to prove directly the following theorem, of which Theorem \ref{bigthm}
is an immediate consequence.\\

\begin{theorem}
\label{bigthm2}
Let $H$ be a fixed sparse paving matroid. Let $M$ be a matroid drawn randomly from
$\mathbb{S}_{n,m}$, where $\mathbb{S}_{n,m}$ denotes those matroids in $\mathbb{S}_n$
for which there exists $A \subset [n], |A| <
r(M)-r_H$ such that $M/A$ contains $m$ element-disjoint copies of $L(H)$.

For any $\epsilon > 0$, there exists $m(\epsilon)$ such that if $m > m(\epsilon)$,
then with probability at least $1-\epsilon$ $M$ will contain $H$ as a minor.
\end{theorem}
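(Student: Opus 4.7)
The plan is to exploit the element-disjoint structure of the $m$ copies: whether copy $i$ extends to a genuine $H$-minor of $M$ depends only on the non-bases of $M$ supported on $X_i \cup A$, and the relevant $r$-sets for different copies turn out to be non-adjacent in the Johnson graph $J(n,r(M))$. Conditional on a choice of witness $(A, X_1, \ldots, X_m)$ and on the non-bases of $M$ elsewhere, the $m$ failure events will then be independent, each failing with probability bounded away from $1$ by a constant depending only on $H$, so the joint failure probability decays exponentially in $m$.

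Setting up notation, fix a witness $(A, \mathbf{X})$ with $|A| = r(M) - r_H$ (which is what is needed for non-bases of $M/A$ to have size $r_H$), and for each $i \in [m]$ write $\mathcal{R}_i = \{B \cup A : B \in L(H)\text{ at }X_i\}$ and $\mathcal{E}_i = \{B \cup A : B \in X_i^{(r_H)}\} \setminus \mathcal{R}_i$. Then $|\mathcal{E}_i| \le e_H := \binom{|E(H)|}{r_H} - |L(H)|$, a constant depending only on $H$, and copy $i$ yields $H$ as a minor of $M$ precisely when no element of $\mathcal{E}_i$ is a non-basis of $M$. The key geometric observation is that for $i \neq j$, any $e_i \in \mathcal{E}_i$ and $e_j \in \mathcal{E}_j$ intersect only in $A$ (since the $X_i$ are disjoint), so $|e_i \cap e_j| = r(M) - r_H \le r(M) - 2$ when $r_H \ge 2$; hence $\mathcal{E}_i$ and $\mathcal{E}_j$ are pairwise non-adjacent in $J(n,r(M))$. (The cases $r_H \le 1$ are degenerate and are treated separately.)

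Condition on the witness and on the non-bases $T_0$ of $M$ lying outside $\bigcup_i \mathcal{E}_i$. Because the $\mathcal{E}_i$ are pairwise non-adjacent, the uniform distribution of the restriction $S$ of the non-bases of $M$ to $\bigcup_i \mathcal{E}_i$ factors as $m$ \emph{independent} uniform choices of stable sets, one in each $J(n,r(M))|_{\mathcal{E}_i'}$, where $\mathcal{E}_i' \subseteq \mathcal{E}_i$ consists of those $e$ with no Johnson-graph neighbour in $T_0$. Each such restricted graph has at most $2^{e_H}$ stable sets (including $\emptyset$), so the probability that the $i$-th factor picks $\emptyset$ — equivalently, that copy $i$ gives $H$ as a minor — is at least $2^{-e_H}$. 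Independence across $i$ then yields a conditional failure probability of at most $(1-2^{-e_H})^m$, which falls below $\epsilon$ once $m \ge m(\epsilon) := \lceil 2^{e_H}\log(1/\epsilon)\rceil$.

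The main obstacle will be the final step of transferring this conditional bound — which holds uniformly across choices of $(A, \mathbf{X})$ and of $T_0$ — to a bound on the probability that $M$ drawn uniformly from $\mathbb{S}_{n,m}$ is $H$-minor-free, since a given $M$ may admit many witnesses and so appear in many fibres of the conditional decomposition. I plan to handle this via a bipartite double count on the incidences between matroids in $\mathbb{S}_{n,m}$ and their witnesses: the conditional analysis shows that at most a $(1-2^{-e_H})^m$ fraction of these incidences are ``failing'', and since an $H$-minor-free $M$ contributes \emph{every} one of its incidences to the failing side, this transfers (after absorbing any witness-multiplicity factor into a mild increase in $m(\epsilon)$) to the desired bound on the probability of having no $H$-minor.
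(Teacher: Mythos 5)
Your conditional analysis --- fixing a witness $(A,\mathbf{X})$, conditioning on the non-bases $T_0$ outside $\bigcup_i\mathcal{E}_i$, and using the pairwise non-adjacency of the $\mathcal{E}_i$ in the Johnson graph to factor the conditional distribution into $m$ independent uniform choices of stable sets on the $\mathcal{E}_i'$ --- is sound and is essentially the same engine as the paper's, giving a per-witness failure probability at most $(1-2^{-e_H})^m$ uniformly over $T_0$ and the witness. The gap is in the transfer step. Your double count gives
$$\sum_{M\ H\text{-free}} w(M) \;\le\; (1-2^{-e_H})^m \sum_{M} w(M),$$
where $w(M)$ is the number of witnesses of $M$; but this bounds the \emph{witness-weighted} fraction of $H$-minor-free matroids, not the counting fraction $|\{M\ H\text{-free}\}|/|\mathbb{S}_{n,m}|$. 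To pass between them you would need either that $w$ is essentially constant, or that $H$-free matroids have at least average $w$, and neither is available: $w(M)$ can range from $1$ to something superpolynomial in $n$, and if anything an $H$-free matroid (which tends to be sparser in non-bases, hence poorer in witnesses) could have well-below-average $w$. The ``witness-multiplicity factor'' is therefore not a constant that can be folded into $m(\epsilon)$; $m(\epsilon)$ must depend only on $\epsilon$ and $H$, while $\max_M w(M)/\min_M w(M)$ is an unbounded function of $n$.

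The paper circumvents this precisely by collapsing the incidence count to a function: it fixes total orders on $[n]^{(r_d)}$ and on collections of disjoint $n_H$-sets and designates, for each $M$, a \emph{unique canonical} witness and a unique ``parent'' $T(M)$, so that each matroid is counted exactly once. The price is that the conditioning event ``$(A,\mathbf{X})$ is the canonical witness'' depends on what non-bases are added inside $\bigcup_i\mathcal{E}_i$, which would wreck the clean product structure you used. The paper rescues the $\tfrac{1}{s_{n_H,r_H}}$ lower bound on per-copy success via a monotonicity observation: within a parent's children, if a given choice of added circuits in $U_0,\ldots,U_{k-1},U_{k+1},\ldots$ can be extended by some addition inside $U_k$ to yield a child, then adding \emph{nothing} inside $U_k$ also yields a child --- and that child has an $H$-minor on $U_k$. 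This is the step your approach is missing, and adding it (i.e., switching to a canonical-witness decomposition and arguing the success-on-$U_k$ probability survives the canonicality constraint) is what is needed to close the gap. A smaller issue: your $\mathcal{E}_i$ is built from $X_i^{(r_H)}$ where $X_i$ is the support of a copy of $L(H)$, but if $H$ has elements lying in no non-basis then $|X_i|<n_H$ and the restriction to $X_i$ cannot be isomorphic to $H$; you need to pad $X_i$ up to $n_H$ elements, as the paper does with its sets $U_i$.
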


\begin{proof}

Assume w.l.o.g. that $\epsilon < 1$.

We aim to assign $M$ a parent matroid $T \in
\mathbb{S}_{n,m}$, so that every choice of $M$ has precisely one such parent. We
show then that for any parent, at most proportion $\epsilon$ of its children are
$H$-free. The same will then be true for the union of offspring of all parents,
which is of course equal to the entirety of $\mathbb{S}_{n,r,m,H}$.

Let $n_H$ denote the number of elements of $H$ and $n_L(H)$ the number of elements
of $H$ that are included in at least one non-basis of $H$ (i.e. the size of the
union of the
non-bases of $H$). Let $r_d = r(M)-r_H$.

We treat our groundset as $[n]$. Imagine taking a collection $U = \{(U_1,L_1),
\ldots, (U_{m-1},L_{m-1})\}$ where the $L_i$ denote $m$ pairwise element-disjoint
copies of the line structure $L(H)$ on $[n]^{(r_H)}$ and the $U_i$ a collection of
$m$ pairwise disjoint $n_H$-subsets of $[n]$ such that, for each pair $(U_i,L_i)$,
the lines of $L_i$ are contained in $U_i$. We define $\mathcal{U}(n,m,L(H))$ to be
the set of all collections of this type.

If $M$ contains $m$ pairwise element-disjoint copies of $L(H)$ then we may assume it
also contains a collection $U \in \mathcal{U}(n,m,L(H))$ (we simply require that
$n(M)>mn_H$ in order to be able to construct such a collection).

Fix some total ordering $*(n,m,L(H))$ on the set $\mathcal{U}(n,m, L(H))$. For ease we
say that $U >^* W$ if $U$ is above $W$ in this ordering. 

Also fix a total ordering $+(n,r_d)$ on $[n]^{(r_d)}$. Write $A >^+ B$  if $A$ is
above $B$ in this ordering.

The reason for all this careful preparation is to ensure each matroid has a single
well defined parent, which means we avoid duplicate counting in the later steps.
Note we can be as careless as we like over the actual orderings: any
total ordering will do (although it is probably most intuitive to imagine that the
orderings are of a lexicographical nature).

We now build the sets from which our "parents" will be taken. For any $A \in
[n]^{(r_d)}$, let $\mathcal{T}_A \subseteq \mathbb{S}_{n,m}$ be those sparse paving
matroids for which:

\begin{itemize}
\item[(i)] $T/A$ contains a collection of $m$ pairwise element-disjoint copies of
$L(H)$ (and hence also one of the paired collections of $\mathcal{U}(n,m,L(H))$).
\item[(ii)] $A$ is the maximal member of $[n]^{(r_d)}$ for which this is true: that
is, $B >^+ A \implies T/B$ does not contain $m$ element disjoint copies of $L(H)$.
\end{itemize}

Now for any $U = \{(U_1,L_1), \ldots, (U_{m-1},L_{m-1})\} \in
\mathcal{U}(n,m,L(H))$, let us define $\mathcal{T}_{A,U}$ to be those members of
$\mathcal{T}_A$ for which:

\begin{itemize}
\item[(i)] $U$ is the maximal member of $\mathcal{U}(n,m,L(H))$ (under our ordering
$*$) that appears in $T/A$.
\item[(ii)] On each $U_i$, $T/A$ contains no circuits other than those in $L_i$:
hence the restriction $(T/A)|U_i \cong H$.
\end{itemize}

We know there exists some $ A \in [n]^{(r_d)}$ such that $L(M/A)$ contains $m$ disjoint
copies of $L(H)$. We can assume w.l.o.g. that $A$ be the maximal such set under our
ordering relation $>^+$. Now assuming that $n(M) - r_d>mn_H$, we also have some $V
\in \mathcal{U}_{n,m,L(H)}$ contained in $M/A$. Let $V$ be the maximal such
collection and denote its members (the $n_H$-subsets) as $V_0, \ldots, V_{m-1}$,
ordered lexicographically.

With some care we define 
$$\Sigma_i = \{A \cup S: S \in V_i^{(r_H)}\}$$

The $\Sigma_i$ represent collections of possible non-bases and provide a simple
condition for $M$ to contain an $H$-minor - we
simply require one of the $\Sigma_i$ to contain no non-bases of $M$ other than those
of form $A \cup l: l \in L_i$. 

We now can set the parent $T(M)$ of $M$ to be $T \in \mathcal{T}_{A,V}$ formed by
deleting from all $\Sigma_i$ all circuits of $M$ that are not part of the copies of
$L(H)$ of $V$ (such that the restriction of $T/A$ to any $V_i$ will be isomorphic to
$H$).

Although the proof we follow is strictly speaking a counting argument, it perhaps is
more naturally understood in the language of probability, which is how we shall
proceed. Let $X_i$ be the event that the restriction of $M/A$ to $S_i$
forms a $H$-minor and $Y_i$ the converse; let $Z_k = Y_0 \cup \ldots \cup Y_k$.

We claim the following: 

\begin{equation}
\label{dPe4}
\mathrm{Pr}(Z_k) \le \left(\frac{s_{n_H,r_H}-1}{s_{n_H,r_H}}\right)^{k+1}
\end{equation}

This is sufficient to prove our theorem: for any $\epsilon > 0$ we could now achieve
$\mathrm{Pr}(M \not\succeq H) <
\epsilon$ simply by setting $m(\epsilon) \ge -\log \epsilon/(\log(s_{n_H}-1) - \log
(s_{n_H}))$.

We shall in fact show that for all $0 \le k \le m-1$, $\mathrm{Pr}(X_k|Z_{k-1}) \ge
\frac{1}{s_{n_H,r_H}}$. Clearly this is enough to imply \ref{dPe4} by simple
inductive reasoning.

Now further define 

Take any $A \in [n]^{(r_d)}, U = \{(U_1,L_1), \ldots, (U_{m-1},L_{m-1})\} \in
\mathcal{U}_{n,m,L(H)}$ and $T \in \mathcal{T}_{A,U}$. We assume the $U_i$ take a
lexicographical ordering (or any total ordering fixed universally on $[n]^{(n_H)}$).
The children of $T$ are contained in the collection of sparse paving matroids which
can be formed by adding to $T$ circuits of the form $A \cup S$, such that $S$ is
contained in some $U_i$.\\
Consider adding such a collection of circuits $\mathcal{C}_k^-$ where we add the
temporary restriction that $S$ can only be chosen from one of $U_1, \ldots, U_{k-1},
U_{k+1}, \ldots, U_{m-1}$. Not every such addition of circuits creates a child: they
may conflict with other circuits of $T$, or they might compromise the maximality of
$U$ or $A$. Suppose however that having chosen this $\mathcal{C}_k^-$, there is also
some further choice $\mathcal{C}_k$ of circuits we can add, this time restricting
$S$ to $U_{k}$, such that we are left with a child of $T$. Then we are certainly
also left with a child of $T$ if we add no circuits where $S$ is chosen from $U_k$.
And contracting this child by $A$ and restricting to $U_i$ gives an $H$-minor. So at
this stage at least one of a maximum $s_{n_H,r_H}$ choices of $\mathcal{C}_k$ leads
to an $H$-minor. Since this holds for every choice of $\mathcal{C}_k^-$, we can
actually say that

$$\mathrm{Pr}(X_k|T(M)=T) \ge \frac{1}{s_{n_H,r_H}}$$

But in fact the same logic prevails even were we to at the previous stages demand
that we add at least one circuit where $S$ is drawn from $U_i$, for all $0 \le i \le
k-1$ - essentially restricting ourselves to considering matroids for which $Z_{k-1}$
holds. So we can in fact say that

$$\mathrm{Pr}(X_k|Z_{k-1},T(M)=T) \ge \frac{1}{s_{n_H,r_H}}$$

And now, writing $\mathcal{U}$ for $\mathcal{U}(n,m,L(H))$, we have

$$\mathrm{Pr}(X_k|Z_{k-1}) = \sum\limits_{U \in \mathcal{U}}\sum\limits_{T
\in\mathcal{T}_U}\mathrm{Pr}(X_k|Z_{k-1},T(M)=T)\mathrm{Pr}(T_(M)=T|Z_{k-1})  $$

$$ \ge \sum\limits_{U \in \mathcal{U}}\sum\limits_{T
\in\mathcal{T}_U}\frac{1}{s_{n_H,r_H}}\mathrm{Pr}(T(M)=T|Z_{k-1}) $$
$$ \ge \frac{1}{s_{n_H,r_H}}\sum\limits_{U \in \mathcal{U}}\sum\limits_{T \in
\mathcal{T}_U}\mathrm{Pr}(T(M)=T|Z_{k-1}) =\frac{1}{s_{n_H,r_H}}$$

We may cancel the sum of probabilities because each matroid has a well-defined
parent, meaning it is counted precisely once in the above summation.

\end{proof}

Note that the above method can be easily adapted to provide an equivalent result
restricted to matroids of any given rank $r \ge r_H$, by requiring that we contract
by a set $A$ of given size. In particular, if we do not contract at all, we get the
following result:\\

\begin{lemma}
\label{bigthmfixrank}
Let $H$ be a fixed sparse paving matroid. Let $M$ be a matroid drawn randomly from
those matroids in $\mathbb{M}_{n,r_H}$ which contain $m$ element-disjoint copies of
$L$.

For any $\epsilon > 0$, there exists $m(\epsilon)$ such that if $m > m(\epsilon)$,
then with probability at least $1-\epsilon$ $M$ will contain $H$ as a minor.
\end{lemma}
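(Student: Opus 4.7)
The plan is to carry out the scheme of the proof of Theorem~\ref{bigthm2} essentially verbatim, with the contraction step removed. Since $M$ is drawn from $\mathbb{M}_{n,r_H}$ --- that is, it already has rank $r_H$ --- we may take $A = \emptyset$ throughout, so $r_d = 0$ and neither the set $[n]^{(r_d)}$ nor the ordering $+(n,r_d)$ plays any role.

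First I would retain the definition of $\mathcal{U}(n,m,L(H))$: collections $U = \{(U_0,L_0), \ldots, (U_{m-1},L_{m-1})\}$ of $m$ pairwise element-disjoint copies of $L(H)$, each $L_i$ supported on an $n_H$-set $U_i$. Fix a total ordering $*$ on $\mathcal{U}(n,m,L(H))$. For any $M$ in the hypothesised class, assign as parent $T(M)$ the matroid obtained by taking the $*$-maximal collection $V = \{(V_0,L_0),\ldots,(V_{m-1},L_{m-1})\}$ appearing in $M$ and then deleting from the non-bases of $M$ every non-basis that lies entirely inside some $V_i$ but is not a line of $L_i$. The parent classes $\mathcal{T}_U \subseteq \mathbb{M}_{n,r_H}$ are then exactly the analogues of the $\mathcal{T}_{A,U}$ of Theorem~\ref{bigthm2}, with condition $T|V_i \cong H$ and with $V$ itself $*$-maximal inside $T$.

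With the bookkeeping in place, the probability estimate goes through in the same manner. Let $X_k$ be the event that $M|V_k \cong H$, let $Y_k$ be its complement, and put $Z_{k-1} = Y_0 \cup \cdots \cup Y_{k-1}$. Given a parent $T$, the children of $T$ are generated by choosing, for each $i$, some (possibly empty) collection of $r_H$-subsets of $V_i$ outside $L_i$ to reinstate as non-bases, subject to the sparse paving condition and to the maximality conditions defining $\mathcal{T}_V$. Whatever the choices made for $i \ne k$, the admissible choices at coordinate $k$ range over at most $s_{n_H,r_H}$ sparse paving structures on $V_k$ containing $L_k$, of which $L_k$ itself realises $M|V_k \cong H$. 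Hence $\mathrm{Pr}(X_k \mid Z_{k-1}, T(M)=T) \ge 1/s_{n_H,r_H}$. Averaging over parents, and using that every child has a unique parent so that the probabilities $\mathrm{Pr}(T(M)=T \mid Z_{k-1})$ sum to $1$, gives $\mathrm{Pr}(X_k \mid Z_{k-1}) \ge 1/s_{n_H,r_H}$. An easy induction then yields $\mathrm{Pr}(Z_{m-1}) \le ((s_{n_H,r_H}-1)/s_{n_H,r_H})^m$, which can be forced below any prescribed $\epsilon$ by choosing $m(\epsilon) \ge -\log \epsilon / (\log s_{n_H,r_H} - \log(s_{n_H,r_H}-1))$.

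The one step requiring genuine attention is the well-definedness of the parent map $M \mapsto T(M)$, since this is what allows the conditional lower bound to propagate to an unconditional one without double-counting. This is exactly the role played by the maximality condition on $V$, and removing $A$ actually simplifies matters by eliminating the second layer of maximality (over $[n]^{(r_d)}$) that featured in the original argument. I do not anticipate any new difficulties beyond those already handled in Theorem~\ref{bigthm2}.
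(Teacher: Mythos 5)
Your proposal is correct and takes the same route the paper intends: the paper gives no separate proof of Lemma~\ref{bigthmfixrank}, only the one-line remark that the argument of Theorem~\ref{bigthm2} adapts by fixing the size of the contraction set $A$ (here $A=\emptyset$), and your detailed run-through of the parent-child bookkeeping, the per-coordinate bound $\mathrm{Pr}(X_k\mid Z_{k-1},T(M)=T)\ge 1/s_{n_H,r_H}$, and the resulting $((s_{n_H,r_H}-1)/s_{n_H,r_H})^m$ estimate is exactly that specialization. (One inherited slip worth fixing: $Z_{k-1}$ should be the intersection $Y_0\cap\cdots\cap Y_{k-1}$, not the union, for the inductive product bound to make sense --- this is a typo already present in the paper's proof of Theorem~\ref{bigthm2}.)
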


We state this because it proves useful to be able to quote the theorem in this form
when pursuing results for matroids of fixed rank in  \ref{ramseychapter}.

\subsection{Abundant minors}
Recall our earlier definition of \textit{abundance}:

\begin{defn} [Abundance]
A line structure $L$ is \textit{abundant} in sparse paving matroids if for any $m
\in \mathbb{N}$, for a.a.a.\ $n$-element sparse paving matroids $M$ there exists a
set $A \in [n]^{(r(M)-r_H)}$ such that $M/A$ contains $m$ element-disjoint copies
of $L$.
\end{defn}

\begin{defn} [Extremal density]
The \textit{extremal density} of a line structure $L$ of rank $r$ in
$\mathbb{S}_{n,r}$ is 

$$\mathrm{ex}(n,L) = \mathrm{max}\{p: \exists M, |\mathcal{C}(M)| \le
p\left(\frac{1}{n}{n \choose r}\right), \mathcal{C}(M) \not\supseteq L\}$$

taken over matroids $M \in \mathbb{S}_{n,r}$.
\end{defn}

In simpler terms, it is the maximal density of non-bases a sparse paving matroid can
have before its non-bases must contain a copy of $L$.

We wish to show that certain conditions of extremal density can imply abundance.

\begin{theorem}
\label{arbitrarilymany}
Let $L$ be a line structure of rank $r_L$ on $n_L$ elements, and $k$ any integer.
Let $L^k$ denote the line structure consisting of $k$ element-disjoint copies of
$L$. Then
$$\lim\limits_{n \rightarrow \infty}\mathrm{ex}(n,L^k) = \lim\limits_{n \rightarrow
\infty}\mathrm{ex}(n,L)$$
\end{theorem}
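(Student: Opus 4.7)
The plan is to prove the two inequalities of the equation separately. The easy direction is the pointwise bound $\mathrm{ex}(n,L^k) \ge \mathrm{ex}(n,L)$, which is immediate: any sparse paving matroid whose nonbases avoid $L$ also avoids $k$ element-disjoint copies of $L$, so every $L$-free extremal example is automatically $L^k$-free. The substantive content is the reverse direction, which I will obtain by showing that $\mathrm{ex}(n,L^k) \le \mathrm{ex}(n-t,\,L) + o(1)$ as $n\to\infty$, where $t=(k-1)n_L$. Because $r_L$, $n_L$ and $k$ are all constants, this is enough to conclude equality of the limits.

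I argue by an iterative stripping process. Fix an extremal $L^k$-free matroid $M \in \mathbb{S}_{n,r_L}$ with $|\mathcal{C}(M)| = \mathrm{ex}(n,L^k)\cdot\tfrac{1}{n}\binom{n}{r_L}$, and set $M_0 = M$. As long as $M_{i-1}$ contains a copy of $L$ on some $n_L$-subset $V_i$ of its groundset, I set $M_i = M_{i-1}\backslash V_i$; this is a matroid deletion whose nonbases are precisely those of $M_{i-1}$ that avoid $V_i$, so $M_i$ remains sparse paving of rank $r_L$ on $n - i n_L$ elements. Any further copy of $L$ inside $M_i$ is element-disjoint from $V_1,\ldots,V_i$, so combined with the previously found copies it yields $i+1$ element-disjoint copies of $L$ in $M$, i.e.\ a copy of $L^{i+1}$. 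Since $M$ is $L^k$-free, the process must halt by step $k-1$, leaving an $L$-free matroid $M'\in \mathbb{S}_{n-t,r_L}$.

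To control the density loss, I use the observation that the link $\{C\setminus\{e\} : e\in C\in \mathcal{C}(M)\}$ of any element $e$ consists of $(r_L-1)$-subsets of $[n]\setminus\{e\}$ pairwise intersecting in at most $r_L-3$ elements (since two nonbases of a sparse paving matroid share at most $r_L-2$), and therefore forms a stable set in $J(n-1,r_L-1)$. By Lemma~\ref{maxstablelem}, at most $\tfrac{1}{n-r_L+1}\binom{n-1}{r_L-1}$ nonbases of $M$ contain any fixed $e$. Summing over the $t = O(1)$ deleted elements, the total loss in nonbases is $O\!\bigl(\binom{n-1}{r_L-1}\bigr) = o\!\bigl(\tfrac{1}{n}\binom{n}{r_L}\bigr)$ because $r_L$ is constant. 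Moreover the normalization ratio $\tfrac{n}{n-t}\binom{n-t}{r_L}/\binom{n}{r_L}$ tends to $1$. Hence the normalized density of $\mathcal{C}(M')$ is $\mathrm{ex}(n,L^k) - o(1)$, yielding $\mathrm{ex}(n-t,\,L) \ge \mathrm{ex}(n,L^k) - o(1)$ as desired. I do not foresee any serious obstacle beyond this density accounting, which is well-behaved precisely because every parameter attached to $L$ is fixed while $n\to\infty$.
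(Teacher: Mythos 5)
Your proof is correct in substance and uses the same core mechanism as the paper: Lemma~\ref{maxstablelem} bounds the number of non-bases through any fixed element by $\tfrac{1}{n-r_L+1}\binom{n-1}{r_L-1} = O\bigl(\tfrac{1}{n^2}\binom{n}{r_L}\bigr)$, which is negligible compared with the density $\Theta\bigl(\tfrac{1}{n}\binom{n}{r_L}\bigr)$, so one can find disjoint copies of $L$ one by one. The paper packages this as an induction on $k$ and relaxes circuit-hyperplanes rather than deleting elements (thereby avoiding your groundset-size normalization entirely), but the argument is essentially the same. One thing to repair in your write-up: the line ``the total loss in nonbases is $O\bigl(\binom{n-1}{r_L-1}\bigr) = o\bigl(\tfrac{1}{n}\binom{n}{r_L}\bigr)$'' is false as stated, since $\binom{n-1}{r_L-1} = \tfrac{r_L}{n}\binom{n}{r_L}$ is $\Theta\bigl(\tfrac{1}{n}\binom{n}{r_L}\bigr)$, not $o$ of it; you have dropped the factor of $\tfrac{1}{n-r_L+1}$ from your own per-element bound, which is exactly what makes the total loss $O\bigl(\tfrac{1}{n^2}\binom{n}{r_L}\bigr)=o\bigl(\tfrac{1}{n}\binom{n}{r_L}\bigr)$. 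Also note the stripping may terminate at some step $j<k-1$, leaving $M'$ on $n-jn_L$ elements rather than $n-t$; this is harmless for the limit argument but your phrasing asserts $n-t$ exactly, and you should also remark (or at least observe) that deleting the $O(1)$ elements does not drop the rank, which holds for large $n$ since the number of non-bases is strictly less than $\binom{n-jn_L}{r_L}$.
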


\begin{proof}
Our proof is inductive. Suppose that 

$$\lim\limits_{n \rightarrow \infty}\mathrm{ex}(n,L^{k-1}) = \lim\limits_{n
\rightarrow \infty}\mathrm{ex}(n,L) = p$$

Clearly $\mathrm{ex}(n,L^{k}) \ge \mathrm{ex}(n,L)$, so we need to show that for any
$\epsilon > 0$, there exists $n(\epsilon)$ such that $n > n(\epsilon) \implies
\mathrm{ex}(n,L^k) < p + \epsilon$.
Let $M$ be an $n$-element sparse-paving matroid of rank $r_L$ with density of
non-bases (the fraction of all $r_L$-sets of $M$ which are non-bases) being $\frac{p
+ \epsilon}{n}$, and suppose that $n$ is large enough that
$\mathrm{ex}(n,L^{k-1}), \mathrm{ex}(n,L) < p + \epsilon/2$. So $M$ must contain
some copy of $L$. But the number of non-bases intersecting with a line in our copy
of $L$ is at most $n_L\frac{1}{n-r_L}{n \choose r_L-1}$
(by~\ref{maxstablelem}). For large enough $n$, this is less than
$\frac{\epsilon}{2n}{n \choose r_L}$, and so even relaxing all these non-bases, we
are left with a density of non-bases greater than $\mathrm{ex}(n,L^{k-1}$). Hence we
must also contain some copy of $L^{k-1}$ disjoint to our copy of $L$, and the union
of these is a copy of $L^k$.
\end{proof}

\begin{lemma}[Abundance Lemma]
\label{abundancelem}
Let $H$ be a sparse paving matroid.
Suppose the following holds: $\exists \epsilon > 0,n_0 \in \mathbb{N},$ such that
for any $n > n_0$ we have $\mathrm{ex}((n,L(H))) < \frac{1}{8}-\epsilon$. Then $H$
is abundant in the sparse paving matroids.
\end{lemma}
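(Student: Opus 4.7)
The plan is an averaging/double-counting argument: we fix $m$ and seek some $A \in [n]^{(r(M)-r_H)}$ for which the contraction $M/A$ carries enough non-bases to be forced to contain $L(H)^m$ by the extremal density hypothesis.

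First, fixing $m$, I apply Theorem~\ref{arbitrarilymany} together with the hypothesis $\mathrm{ex}(n,L(H)) < \frac{1}{8}-\epsilon$ to obtain some $n_1$ such that $\mathrm{ex}(n',L(H)^m) < \frac{1}{8}-\frac{\epsilon}{2}$ for every $n' > n_1$: thus any $n'$-element sparse paving matroid of rank $r_H$ with more than $(\frac{1}{8}-\frac{\epsilon}{2})\frac{1}{n'}\binom{n'}{r_H}$ non-bases must contain $L(H)^m$. By Lemma~\ref{aaaranges} and Corollary~\ref{extcor1}, I may also assume (a.a.a.) that $r := r(M) \in ((\frac{1}{2}-\delta)n, (\frac{1}{2}+\delta)n)$ and $|L(M)| \geq (1-\eta)\frac{1}{4n}\binom{n}{r}$, for arbitrarily small constants $\delta, \eta > 0$ to be fixed below.

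The heart of the argument is the double count. Setting $r_d = r - r_H$ and $n' = n - r_d$, every $A \in [n]^{(r_d)}$ has $|A| < r$ and is therefore independent in $M$, so $M/A$ is sparse paving of rank $r_H$, with non-bases exactly the $r_H$-sets $S \subseteq [n] \setminus A$ for which $A \cup S \in L(M)$. Since each non-basis $C \in L(M)$ contains precisely $\binom{r}{r_d}$ subsets of size $r_d$, double counting yields
$$\sum_{A \in [n]^{(r_d)}} |L(M/A)| \;=\; |L(M)| \binom{r}{r_d}.$$
Dividing by $\binom{n}{r_d}$ and using the identity $\binom{n}{r}\binom{r}{r_d} = \binom{n}{r_d}\binom{n'}{r_H}$, the average of $|L(M/A)|$ equals $|L(M)|\binom{n'}{r_H}/\binom{n}{r}$; in particular, \emph{some} $A$ realises $|L(M/A)| \geq (1-\eta)\frac{1}{4n}\binom{n'}{r_H}$.

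Finally I must confirm that the resulting density $(1-\eta)/(4n)$ of non-bases in $M/A$ exceeds the threshold $(\frac{1}{8}-\frac{\epsilon}{2})/n'$. This rearranges to $(1-\eta)n' > (\frac{1}{2}-2\epsilon)n$, and since $n' > (\frac{1}{2}-\delta)n + r_H$ by rank concentration, the inequality holds once $\delta, \eta$ are chosen small enough (e.g.\ $\frac{\eta}{2}+\delta < 2\epsilon$) and $n$ is sufficiently large. The main — and essentially only — obstacle is precisely this constant comparison: the factor $\frac{1}{4}$ from the guaranteed non-basis density, multiplied by the factor $n'/n \approx \frac{1}{2}$ coming from rank concentration, produces exactly $\frac{1}{8}$, which is why that constant appears in the hypothesis. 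Once this inequality holds, $|L(M/A)|$ exceeds the extremal threshold for $L(H)^m$, and so $M/A$ must contain $m$ element-disjoint copies of $L(H)$, establishing abundance.
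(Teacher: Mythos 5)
Your proof is correct and follows essentially the same density-averaging argument as the paper: reduce to $L(H)^m$ via Theorem~\ref{arbitrarilymany}, use Corollary~\ref{extcor1} and Lemma~\ref{aaaranges} to lower-bound the non-basis density and concentrate the rank, then observe that contraction by a random $r-r_H$-set preserves expected non-basis density so some $A$ pushes $M/A$ past the extremal threshold. Your explicit double count $\sum_{A}|L(M/A)| = |L(M)|\binom{r}{r_d}$ is just a fleshed-out version of the paper's one-line expectation claim, and is arguably cleaner.
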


\begin{proof}
Firstly, by Lemma~\ref{arbitrarilymany} we can assume w.l.o.g. that for $n > n_0$ we
have $\mathrm{ex}((n,L(H)^m)) < \frac{1}{8}-\epsilon$.

We know from Corollary~\ref{extcor1} that asymptotically almost all $M
\in \mathbb{S}_{n,r}$ have 
$$|\mathcal{C}(M)| > \left(\frac{1}{4}-\frac{\epsilon}{2}\right)\left(\frac{1}{n}{n
\choose r}\right)$$

and that also for any $\delta > 0$ asymptotically almost always
$\left(\frac{1}{2}-\delta\right)n \le r(M) \le \left(\frac{1}{2}+\delta\right)n$
(Lemma~\ref{aaaranges})\\
Consider only sparse paving matroids $M$ that meet the above two conditions, and
assume w.l.o.g. $n \ge n_0$. 
Contracting $M$ by $r-r_H$ randomly chosen elements, the expected density of
non-bases in our new
matroid is equal to the density of non-bases in $M$, so there must be some set $A$ of
$r-r_H$ elements such that 

$$|\mathcal{C}(M/A)| >
\left(\frac{1}{4}-\frac{\epsilon}{4}\right)\left(\frac{1}{n}{n' \choose
r_H}\right)$$

where $n' = n +r_H-r$.

Note that $n' > (1-\delta)\frac{n}{2} \implies \frac{1}{n} >
\frac{(1-\delta)}{2}\frac{1}{n'}$. This means by setting $\delta$ small enough we
can force \\

$$|\mathcal{C}(M/A)| >
\left(\frac{1}{8}-\frac{\epsilon}{2}\right)\left(\frac{1}{n_0}{n' \choose
r_H}\right)$$

But now we know $M/A$ has to contain a copy of
$L(H)^m$, and so $m$ copies of $L(H)$.

\end{proof}

We can now identify various matroids which have line structures abundant in the
sparse paving matroids, and hence are contained as a minor in asymptotically almost
all sparse paving matroids.

Firstly, using the fact the $L(U_{t,k})$ is empty and so trivially abundant in the
sparse paving matroids:\\

\begin{theorem}
For any integers $t,k,$, the uniform matroid $ U_{t,k}$ is contained as a minor in
asymptotically almost all sparse
paving matroids.
\end{theorem}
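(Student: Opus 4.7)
The key observation is that the uniform matroid $U_{t,k}$ has every $t$-subset of $[k]$ as a basis, so its set of non-bases is empty and consequently the line structure $L(U_{t,k})$ is the empty collection. My plan is to verify directly that the empty line structure is abundant in the sparse paving matroids, and then invoke Theorem~\ref{bigthm}.

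To check abundance, fix any $m \in \mathbb{N}$ and consider an $n$-element sparse paving matroid $M$. By Lemma~\ref{aaaranges}, asymptotically almost all such $M$ satisfy $r(M) > t$ (indeed $r(M)$ is close to $n/2$), so I can pick any $A \subset [n]$ with $|A| = r(M) - t$, which satisfies the requirement $|A| < r(M) - r_{U_{t,k}}$ only strictly if $t \ge 1$; the degenerate case $t=0$ is immediate since $U_{0,k}$ is a minor of every matroid with at least $k$ elements. Then $M/A$ has a groundset of size $n - r(M) + t$, which eventually exceeds $mk$, so I may select $m$ pairwise disjoint $k$-subsets $U_1,\ldots,U_m$ inside this groundset. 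Each $U_i$ trivially hosts a copy of the empty line structure $L(U_{t,k})$, and these $m$ copies are element-disjoint by construction. Hence $L(U_{t,k})$ is abundant.

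Applying Theorem~\ref{bigthm} with $H = U_{t,k}$ then yields the desired conclusion. The only thing one needs to double-check is that the machinery of Theorem~\ref{bigthm2} genuinely accommodates the empty-$L(H)$ case: inspecting that argument, the condition $(T/A)|U_i \cong H$ translates here into the requirement that no $t$-subset of $U_i$ be a non-basis of $T/A$, and the proportion of admissible sparse-paving restrictions with this property is at least $1/s_{k,t}$, so the inductive probability estimate used in the proof of Theorem~\ref{bigthm2} goes through unchanged. There is effectively no obstacle beyond confirming that the abundance hypothesis is vacuously satisfied, which is the content of the middle paragraph.
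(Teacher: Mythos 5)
Your proposal is correct and takes essentially the same route as the paper, whose entire proof is the observation immediately preceding the theorem statement that $L(U_{t,k})$ is empty and hence trivially abundant, followed by an appeal to Theorem~\ref{bigthm}. You have simply spelled out why the empty line structure is abundant (disjoint $k$-sets always exist once the contracted groundset is large enough) and verified that the counting argument of Theorem~\ref{bigthm2} tolerates the degenerate empty-$L(H)$ case, which the paper leaves implicit.
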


\begin{theorem}
Let $H$ be sparse paving matroid of rank $r$ in which all non-bases are pairwise
disjoint. $L(H)$ is abundant in the sparse paving matroids.
\end{theorem}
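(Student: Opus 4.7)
The plan is to invoke the Abundance Lemma (Lemma~\ref{abundancelem}), which reduces the task to exhibiting some fixed $\epsilon > 0$ such that $\mathrm{ex}(n, L(H)) < \frac{1}{8} - \epsilon$ for all sufficiently large $n$. I would actually prove the stronger, cleaner statement that $\mathrm{ex}(n, L(H)) \to 0$ as $n \to \infty$, from which the required inequality is immediate.

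Since the non-bases of $H$ are pairwise disjoint, $L(H)$ is simply a collection of $k := |\mathcal{C}(H)|$ pairwise disjoint $r_H$-sets. Finding a copy of $L(H)$ inside $\mathcal{C}(M)$ therefore amounts to finding $k$ pairwise disjoint non-bases in $M$. The natural approach is a greedy selection: iteratively pick non-bases $B_1, B_2, \ldots, B_k$ from $\mathcal{C}(M)$, each disjoint from the union of the previous choices.

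The key ingredient is a bound on how many non-bases are ruled out at each step. Since each chosen $B_i$ has only $r_H$ elements, it suffices to bound the number of non-bases containing a given element $e$. These form a stable set in $J(n-1, r_H-1)$, so by Lemma~\ref{maxstablelem} there are at most $\frac{1}{n-r_H+1}\binom{n-1}{r_H-1}$ of them. Multiplying by $r_H$ gives an upper bound of order $\binom{n}{r_H}/n^2$ for the non-bases intersecting a single $B_i$, with the implicit constant depending only on $r_H$. A short calculation then shows that $|\mathcal{C}(M)| > (k-1) \cdot \frac{r_H}{n-r_H+1}\binom{n-1}{r_H-1}$ is enough to carry the greedy through $k$ steps, yielding $\mathrm{ex}(n, L(H)) \le \frac{(k-1)\, r_H^2}{n - r_H + 1}$, which tends to $0$ as $n \to \infty$.

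I do not anticipate a genuine obstacle here: the argument works precisely because $r_H$ and $k$ are constants determined by the fixed matroid $H$ while $n$ grows, so any bounded number of greedy selections erodes only a vanishing fraction of the non-basis budget. The only step requiring mild care is keeping the normalisations straight between the ``$p/n$'' scaling in the definition of $\mathrm{ex}$ and the counting output of the greedy argument; once that matches up, Lemma~\ref{abundancelem} closes the proof.
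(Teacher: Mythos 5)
Correct, and essentially the paper's approach: both routes reduce via the Abundance Lemma to showing $\mathrm{ex}(n,L(H)) \to 0$. The paper does this tersely by observing that a single line has vanishing extremal density and then invoking Theorem~\ref{arbitrarilymany} for the disjoint-copies iteration, whereas you re-derive that same greedy argument directly from the Johnson-graph stable-set bound (Lemma~\ref{maxstablelem}) — which is exactly what the proof of Theorem~\ref{arbitrarilymany} does internally.
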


\begin{proof}
Clearly any matroid with non-zero density of non-bases contains a single non-basis.
Apply Lemma~\ref{arbitrarilymany}.
\end{proof}

\begin{corollary}
Let $M$ be any sparse paving matroid  of rank $r$ in which all non-bases are
pairwise disjoint. Asymptotically almost all sparse paving matroids contain $M$ as a
minor.
\end{corollary}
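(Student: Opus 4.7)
The plan is simply to combine the immediately preceding theorem with Theorem~\ref{bigthm}, since these two together are essentially designed to yield corollaries of this form. The preceding theorem, applied with $H$ taken to be $M$ itself, asserts that whenever the non-bases of a sparse paving matroid are pairwise disjoint, its associated line structure is abundant in the sparse paving matroids. Theorem~\ref{bigthm} then immediately converts abundance of $L(M)$ into the statement that asymptotically almost all sparse paving matroids contain $M$ as a minor. So there is really nothing substantial left to prove.

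Concretely, I would first invoke the preceding theorem to conclude that $L(M)$ is abundant: for every $m \in \mathbb{N}$, asymptotically almost all $n$-element sparse paving matroids $N$ admit a contraction $N/A$ with $|A| = r(N) - r(M)$ whose non-bases contain $m$ element-disjoint copies of $L(M)$. Second, I would plug this into Theorem~\ref{bigthm} with $H = M$; the conclusion is exactly that asymptotically almost all sparse paving matroids contain $M$ as a minor.

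The main (and only) ``obstacle'' is really just to check that $M$ satisfies the hypothesis of the preceding theorem, which it does by assumption, and to confirm that the notion of containment matches between the two results: the preceding theorem gives element-disjoint copies of $L(M)$ as a subhypergraph of the non-bases of a contraction of a random sparse paving matroid, and this is exactly the hypothesis Theorem~\ref{bigthm} needs in order to upgrade containment of $L(M)$ to containment of $M$ itself as a minor (the mechanism being that among many disjoint candidate copies, at least one is almost surely free of extra non-bases). So the proof amounts to a two-line citation chain.
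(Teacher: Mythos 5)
Your proposal is correct and follows exactly the route the paper intends: the corollary is stated immediately after the theorem establishing abundance of $L(M)$, with no explicit proof given, precisely because combining that theorem with Theorem~\ref{bigthm} (taking $H = M$) yields the conclusion directly. Your two-step citation chain is the intended argument.
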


\begin{theorem}
Let $H$ be a sparse paving matroid of rank $r$ in which all non-bases meet in a set of
size $r-2$. $L(H)$ is abundant in the sparse paving matroids.
\end{theorem}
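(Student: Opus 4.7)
The plan is to invoke the Abundance Lemma (Lemma~\ref{abundancelem}), so it suffices to show that $\mathrm{ex}(n,L(H))<\tfrac{1}{8}-\epsilon$ for some $\epsilon>0$ and all sufficiently large $n$. In fact I expect the much stronger statement $\mathrm{ex}(n,L(H))\to 0$ as $n\to\infty$, which will easily suffice. The setup is that $L(H)$ consists of some fixed number $k$ of $r$-sets all containing a common $(r-2)$-set $S$, and (because $L(H)$ is itself a line structure) the $k$ pairs obtained by deleting $S$ are pairwise disjoint.

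The key structural observation is the following ``link'' construction. For any sparse paving matroid $M\in\mathbb{S}_{n,r}$ and any $(r-2)$-subset $T\subseteq[n]$, define the graph $G_T$ on the vertex set $[n]\setminus T$ by making $\{a,b\}$ an edge exactly when $T\cup\{a,b\}\in\mathcal{C}(M)$. If $T\cup\{a,b\}$ and $T\cup\{c,d\}$ are both non-bases of $M$ sharing $T$, then since $M$ is sparse paving we need $|(T\cup\{a,b\})\cap(T\cup\{c,d\})|\le r-2$, forcing $\{a,b\}\cap\{c,d\}=\emptyset$. Hence each $G_T$ is a matching. Note that finding any $k$ edges in some single $G_T$ gives $k$ non-bases of $M$ sharing the common $(r-2)$-set $T$ and with pairwise disjoint ``completions'', which is exactly a copy of $L(H)$.

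A double count then finishes things off. Each non-basis contains $\binom{r}{r-2}=\binom{r}{2}$ different $(r-2)$-subsets, so
$$\sum_{T\in[n]^{(r-2)}}|E(G_T)|=\binom{r}{2}\,|\mathcal{C}(M)|.$$
Hence if $|\mathcal{C}(M)|\ge \tfrac{p}{n}\binom{n}{r}$, the average value of $|E(G_T)|$ is $\tfrac{p}{n}\binom{r}{2}\binom{n}{r}/\binom{n}{r-2}=\tfrac{p(n-r+2)(n-r+1)}{2n}$. With $r=r_H$ fixed this grows linearly in $n$ for any fixed $p>0$, so in particular once $n$ is large it exceeds $k$. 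Pigeonholing produces a $T$ with $|E(G_T)|\ge k$, yielding the desired copy of $L(H)$. This shows $\mathrm{ex}(n,L(H))\to 0$, comfortably satisfying the hypothesis of the Abundance Lemma.

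I do not anticipate a serious obstacle here: the only subtle point is the sparse paving condition forcing each $G_T$ to be a matching (rather than an arbitrary graph), and once this is noted the averaging bound is routine because $k$ and $r_H$ are fixed while $n\to\infty$.
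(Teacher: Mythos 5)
Your proposal is correct and takes essentially the same approach as the paper: reduce to bounding $\mathrm{ex}(n,L(H))$ via the Abundance Lemma, then average over $(r_H-2)$-subsets to find one contained in at least $|L(H)|$ non-bases. Your explicit ``link graph is a matching'' observation is a slightly cleaner way of justifying that the non-bases found really form a copy of $L(H)$ (a point the paper leaves implicit), but it is the same averaging argument underneath.
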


\begin{proof}
This is also relatively easy.

Suppose $H$ is a sparse paving matroid on $n_H$ elements with rank $r_H$ and all its
lines intersect in one $(r_H-2)$-subset of $[n_H]$. Let $M$ be an $n$-element sparse
paving matroid of rank $r_H$ with density of non-bases being $\frac{\epsilon}{n}$,
for some $\epsilon > 0$. Now given any $(r-2)$-subset of $[n]$, the expected number
of non-bases of $M$ which contain that set is $\frac{\epsilon}{n}{(n+2-r_H) \choose
2}$, so there must be some $(r_H-2)$-subset $A$ such that 

$$|\{C \in \mathcal{C}(M): C \supseteq A\}| \ge \frac{\epsilon}{n}{n+2-r_H \choose
2} \ge \frac{\epsilon}{n}{n/4 \choose 2}$$ 

For large enough $n$ this number will be greater than $|L(H)|$, and hence
$\mathrm{ex}((n,L(H))) < \epsilon$.
\end{proof}

\begin{corollary}
Let $M$ be any sparse paving matroid  of rank $r$ in which all non-bases meet in a
set of size $r-2$. Asymptotically almost all sparse paving matroids contain $M$ as a
minor.
\end{corollary}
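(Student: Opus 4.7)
The plan is straightforward: this corollary should follow as an immediate consequence of the preceding theorem chained with Theorem~\ref{bigthm}. The structure exactly mirrors the corollary two above this one (on matroids with pairwise disjoint non-bases), where a similar one-line deduction works.

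First, I would apply the immediately preceding theorem with $H := M$ to conclude that the line structure $L(M)$ is abundant in the sparse paving matroids. This is legitimate because $M$ satisfies the hypothesis of that theorem, namely that all its non-bases meet in a common $(r-2)$-set. Second, I would invoke Theorem~\ref{bigthm}, which guarantees that whenever $L(H)$ is abundant in sparse paving matroids, asymptotically almost all sparse paving matroids contain $H$ as a minor. Substituting $H := M$ delivers the statement of the corollary.

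There is no serious obstacle in this corollary itself: it is a one-step bookkeeping deduction. The real work lies upstream. In particular, the abundance conclusion from the preceding theorem relies on the averaging argument showing that some $(r_H-2)$-subset $A$ is contained in a number of non-bases that asymptotically exceeds $|L(H)|$, combined with Lemma~\ref{abundancelem} (via the extremal density bound $\mathrm{ex}(n,L(H)) < \epsilon$ for arbitrarily small $\epsilon$), and with Theorem~\ref{arbitrarilymany} to promote one copy of $L(H)$ into $m$ element-disjoint copies. Theorem~\ref{bigthm} then packages all of this into the minor-containment statement. Since those tools are already available and cited as prior results in the excerpt, the corollary requires nothing beyond quoting them.
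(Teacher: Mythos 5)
Your proposal is correct and matches the paper's intended derivation exactly: the corollary is a one-line consequence of the immediately preceding theorem (establishing that $L(M)$ is abundant) combined with Theorem~\ref{bigthm}, and the paper accordingly gives no separate proof for it.
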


We next consider the rank 3 whirl, $W_3$.\\

\begin{theorem}
There  exists a matroid $H$, containing $W_3$ as a minor, such that $L(H)$ is
abundant in the sparse paving matroids.
\end{theorem}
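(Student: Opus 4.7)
The plan is to take $H = W_3$ itself---so $W_3 \preceq H$ trivially---and establish abundance of $L(W_3)$ via Lemma~\ref{abundancelem}; combined with Theorem~\ref{bigthm} this will yield $W_3$ as a minor of asymptotically almost all sparse paving matroids. Recall $L(W_3)$ is the line structure consisting of three rank-$3$ lines pairwise meeting in three distinct singletons.

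By Lemma~\ref{abundancelem} it suffices to prove the extremal bound $\mathrm{ex}(n, L(W_3)) < \tfrac{1}{8} - \epsilon$ for all sufficiently large $n$. A rank-$3$ sparse paving matroid $M$ is precisely a partial Steiner triple system (a linear $3$-uniform hypergraph). The key observation is that in a linear $3$-uniform hypergraph, three distinct edges $T_1, T_2, T_3$ using exactly six vertices in total must form an $L(W_3)$-configuration: by linearity $|T_i \cap T_j| \le 1$ for all $i \neq j$, and the inclusion--exclusion identity
$$|T_1 \cup T_2 \cup T_3| = 9 - |T_1 \cap T_2| - |T_1 \cap T_3| - |T_2 \cap T_3| + |T_1 \cap T_2 \cap T_3|$$
combined with $|T_1 \cup T_2 \cup T_3| = 6$ forces each pairwise intersection to have size exactly $1$ and the triple intersection to be empty (a common vertex would give $|V| = 7$, the ``sunflower'' case). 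Hence an $L(W_3)$-free partial Steiner triple system is exactly a $(6,3)$-free linear $3$-uniform hypergraph. The Ruzsa--Szemer\'edi $(6,3)$-theorem then gives $|\mathcal{C}(M)| = o(n^2)$, so the density in the paper's units is $n|\mathcal{C}(M)|/\binom{n}{3} = o(1) \to 0$ and $\mathrm{ex}(n, L(W_3)) \to 0$, well below $\tfrac{1}{8} - \epsilon$ for all large $n$. Lemma~\ref{abundancelem} then gives abundance of $L(W_3)$, and Theorem~\ref{bigthm} finishes.

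The main obstacle is recognising the connection to the Ruzsa--Szemer\'edi theorem: without it the extremal bound appears intractable, as the naive cherry-counting / pair-graph triangle-counting argument only forces extra (non-Type-$1$) triangles in the pair-graph once its edge density exceeds $\tfrac{1}{2}$ (via Moon--Moser), which corresponds to non-basis density well above our $\tfrac{1}{8}$ threshold. The insight that $L(W_3)$-freeness in a linear triple system is \emph{exactly} the $(6,3)$-free hypothesis is what unlocks the bound. A secondary technical point is that the standard proof of the $(6,3)$-theorem uses Szemer\'edi's regularity lemma and gives only $o(n^2)$ without explicit control of $n_0(\epsilon)$; but Lemma~\ref{abundancelem} asks only that the bound hold for all sufficiently large $n$, so this qualitative version suffices, and effective quantitative versions (e.g.\ due to Fox) are available if desired.
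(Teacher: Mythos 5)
Your proof is correct, and it takes a genuinely different route from the paper. The paper does not prove abundance of $L(W_3)$ itself; instead it constructs a rank-$10$ sparse paving matroid $H$ (three $10$-element non-bases pairwise meeting in $8$ elements) with $H/\{a_1,\dots,a_7\} \cong W_3$, and proves $\mathrm{ex}(n, L(H)) < \tfrac{1}{8} - \epsilon$ for a tiny explicit $\epsilon = \tfrac{1}{2}(\tfrac{1}{8} - \tfrac{1}{9})$ via a somewhat delicate elementary argument: an averaging/Jensen step to control second moments of codegree, followed by a pigeonhole argument to produce two non-bases near a given one. You instead take $H = W_3$ directly. Your key observation --- that in a linear $3$-uniform hypergraph any three edges span at least six vertices, and span exactly six iff they form the $L(W_3)$ configuration (three triples pairwise meeting in distinct singletons) --- correctly identifies $L(W_3)$-freeness for rank-$3$ sparse paving matroids with the $(6,3)$-free condition; the inclusion--exclusion check and the sunflower-case exclusion are both sound, as is the reduction of $(6,3)$-freeness to Ruzsa--Szemer\'edi. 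This gives $\mathrm{ex}(n, L(W_3)) \to 0$, which is far stronger than what the Abundance Lemma requires, and you correctly note that the ineffectiveness of the regularity-lemma proof is irrelevant since the lemma only asks for the bound at all large $n$.

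The trade-off between the two approaches is worth recording. Yours is cleaner, proves abundance of the exact line structure $L(W_3)$ rather than a blown-up proxy, and exposes a genuine link between this minor-inclusion problem and classical extremal hypergraph theory; the cost is that it imports the Ruzsa--Szemer\'edi theorem, whose standard proof runs through Szemer\'edi regularity, so it is not self-contained. The paper's argument is elementary and self-contained, but it only establishes abundance for the auxiliary $H$ at a density threshold barely below $\tfrac{1}{8}$, and it is harder to follow and verify (indeed, the density bookkeeping in the paper's version is somewhat confusing). Both routes correctly yield the stated theorem (and its corollary that $W_3$ is a minor of asymptotically almost all sparse paving matroids) via the Abundance Lemma and Theorem~\ref{bigthm}.
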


\begin{proof}
We find a matroid $H \succeq W_3$ such that for $\epsilon =
\frac{1}{2}\left(\frac{1}{8} - \frac{1}{9}\right)$, and large enough $n$,
$\mathrm{ex}((n,L(H))) < \frac{1}{8}-\epsilon$. Consider a matroid $M$  in
$\mathbb{S}(n,10)$
with at least density $(\frac{1}{4}-\epsilon)$ of non-bases. Let $Y$ be the random
variable denoting the number of non-bases which contain a set of 8
elements $\{a_1',a_2',\ldots,a_8'\}$, when randomly choosing that set from the
groundset elements.

Now consider drawing at random a circuit $C= \{a_1, \ldots, a_{10}\} \in
\mathcal{C}(M)$, and randomly discarding one element (say $a_{10}$) to form the set
$C^- = \{a_1,\ldots,a_9\}$. For each $i \in \{1, \ldots, 9\}$, let $X_i$ be the
derived variable given by the number of non-bases of $M$ containing $C^- - a_i$. Now
for any choice of $i$ we have:

$$ \mathrm{E}(X_i) = \sqrt{\mathrm{E}(Y^2)} \ge \mathrm{E}(Y)  \ge \left(\frac{1}{8}
-\epsilon\right) \left(\frac{n-9}{2}\right) $$ 

with the first inequality an application Jensen's inequality. But then 

$$\sum\limits_{i=1}^9 \mathrm{E}(X_i) >
\left(\frac{n-9}{2}\right)\left(1+\frac{1}{72}\right) > \frac{n}{2}$$

(the final inequality assuming sufficiently large $n$).

So there is some choice of $C = \{a_1, \ldots, a_{10}\}$ such that  $\sum_{\alpha
\in C^-} X_{\alpha} > n/2$. But by
the pigeonhole principle we must have some element $b \not\in C$ such that $M$
contains more than one circuit of form 

$$C-\{a_10, a_i\} \cup \{b,y\}, i \in \{1, \ldots, 9\}, y \not\in C$$

Note that the choices of deleted $a_i$ must be different, else the non-bases would
differ in only one element. Say without loss of generality we have non-bases $C' =
\{a_1,\ldots,a_8,b,c\}$ and $C''= \{a_1,\ldots,a_7,a_9,b,d\}$. So the line structure
$L$ isomorphic to $\{C,C',C''\}$ has extremal density less than
$\frac{1}{8}-\epsilon$ and is abundant in the sparse paving matroids.
And letting $H = M(L)$, we see that $H/\{a_1, \ldots, a_7\} \cong W_3$.

\end{proof}

\begin{corollary}
Asymptotically almost all sparse paving matroids contain $W_3$ as a minor.
\end{corollary}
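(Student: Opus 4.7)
The corollary is essentially immediate from the preceding theorem combined with the main result of the section, so my plan is quite short. The previous theorem supplies a concrete matroid $H$ with $H \succeq W_3$ (specifically $H = M(L)$ where $L = \{C, C', C''\}$ is a three-line structure of rank $10$ whose extremal density is bounded below $\tfrac{1}{8} - \epsilon$) such that $L(H)$ is abundant in the sparse paving matroids. I would start by simply invoking that result verbatim.

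Next I would feed this $H$ into Theorem~\ref{bigthm}, whose hypothesis is exactly abundance of $L(H)$. Its conclusion is that asymptotically almost all sparse paving matroids contain $H$ as a minor. This takes care of all the real work: the chain of reductions via maximal extensions (Theorem~\ref{extthm1}, Corollary~\ref{extcor1}), together with the parent-assignment counting argument in Theorem~\ref{bigthm2} and the extremal-density-to-abundance passage in Lemma~\ref{abundancelem}, have already done the heavy lifting.

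Finally, I would close the argument with the transitivity of the minor relation. Since the previous theorem explicitly verifies that $H/\{a_1, \ldots, a_7\} \cong W_3$, every matroid containing $H$ as a minor also contains $W_3$ as a minor. Composing this with the previous step, asymptotically almost all sparse paving matroids contain $W_3$ as a minor, as required.

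There is no real obstacle here because the corollary is a packaging of the previous theorem and Theorem~\ref{bigthm}; the only conceptual point worth stating explicitly in the write-up is transitivity of ``being a minor of.'' If one wanted to be extra careful, one could also note that the set of $M \in \mathbb{S}_n$ which contain $H$ as a minor is a subset of those which contain $W_3$ as a minor, so the limiting density of the latter is at least that of the former, namely $1$.
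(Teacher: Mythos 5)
Your proposal is correct and matches the paper's (implicit) argument exactly: the paper leaves the corollary unproved precisely because it follows from the preceding theorem via Theorem~\ref{bigthm} and transitivity of the minor relation, which is what you spell out.
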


We could continue in this vein, addressing target minors one by one, and doubtless
there are many which would fall to arguments like the above (albeit perhaps
increasingly complicated arguments!) For now we shall leave that area open.

\section{A Ramsey-theoretical approach} \label{ramseychapter}

We have already seen that sparse paving matroids can be identified with hypergraphs
whose
edges are the non-bases of the matroid. Moreover the independent (or
dependent) sets of any given rank can be viewed as a hypergraph. This opens up
varied possibilities for using Ramsey-theoretical techniques as a means to answering
minor-inclusion questions. 

The relevant result of Ramsey theory is the general Ramsey Theorem for hypergraphs.

\begin{theorem}[Ramsey]
Let $G$ be a $r$-uniform hypergraph on $n$ vertices with edges $k$-coloured - that
is, taking colours in some sets $\{c_1, c_2, \ldots, c_k\}$. Then for any positive
integers $t_1,t_2,\ldots, t_k$ there exists some number $n(t_1, t_2, \ldots, t_k)$
such that for some $i \in \{1, 2, \ldots, k\}, G$ contains a $c_i$-coloured
$K_{t_i}^{(r)}$.
\end{theorem}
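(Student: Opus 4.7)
The plan is to prove this by induction on the uniformity $r$, via the standard iterative argument of Erd\H{o}s and Szekeres. I treat the hypothesis as asserting that $\chi$ colours every $r$-subset of an $N$-element ground set (i.e.\ $G = K_N^{(r)}$), which is the usual formulation. The base case $r = 1$ is the pigeonhole principle: any $k$-colouring of a set of $1 + \sum_j(t_j-1)$ elements contains a monochromatic subset of size $t_i$ in some colour $c_i$.

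For the inductive step, assume the theorem for $(r-1)$-uniform hypergraphs with arbitrary colour and target parameters. Given a $k$-colouring $\chi$ of $\binom{[N]}{r}$ with $N$ sufficiently large, I would build a sequence of vertices $v_1, v_2, \ldots, v_M$ and nested subsets $[N] = S_0 \supseteq S_1 \supseteq \cdots \supseteq S_M$ with $v_i \in S_{i-1} \setminus S_i$, arranged so that for every $(r-1)$-subset $T \subseteq S_i$ the colour $\chi(\{v_i\} \cup T)$ depends only on $v_i$; call this value $\chi'(v_i)$. The construction proceeds step by step: having fixed $S_{i-1}$, pick any $v_i \in S_{i-1}$ and observe that the map $T \mapsto \chi(\{v_i\} \cup T)$ is a $k$-colouring of $\binom{S_{i-1} \setminus \{v_i\}}{r-1}$; the inductive hypothesis then yields a subset $S_i \subseteq S_{i-1} \setminus \{v_i\}$ on which this induced colouring is constant, and that constant becomes $\chi'(v_i)$. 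The key observation is that for $i_1 < i_2 < \cdots < i_r$ one has $\{v_{i_2}, \ldots, v_{i_r}\} \subseteq S_{i_1}$, whence $\chi(\{v_{i_1}, \ldots, v_{i_r}\}) = \chi'(v_{i_1})$. So the $r$-uniform colouring restricted to $\{v_1, \ldots, v_M\}$ is entirely determined by the single-vertex function $\chi'$, and a final pigeonhole on $\chi'$ with $M \ge 1 + \sum_j(t_j - 1)$ furnishes some colour $c_j$ assigned to at least $t_j$ of the $v_i$, which span the required monochromatic $K_{t_j}^{(r)}$.

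The main obstacle, and the source of the notoriously fast-growing Ramsey numbers, is parameter bookkeeping. To ensure the iterative construction runs for all $M$ steps, one must choose $|S_{i-1}|$ large enough that the $(r-1)$-uniform Ramsey number associated with the required size of $|S_i|$ (for $k$ colours) is at most $|S_{i-1}| - 1$. Starting from $|S_M|$ equal to whatever trivially suffices and working backward through $M$ layers of the $(r-1)$-dimensional Ramsey function produces a finite (if tower-type) value of $n(t_1, \ldots, t_k)$; there is no genuine difficulty beyond verifying that this recursion is well-defined and terminates.
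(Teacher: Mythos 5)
Your argument is correct and is the standard Erd\H{o}s--Szekeres iterative proof of the multicolour hypergraph Ramsey theorem. Note, however, that the paper does not supply a proof of this statement at all --- it quotes Ramsey's theorem as a classical result and immediately applies it --- so there is no in-paper proof to compare against. Your reading of the hypothesis (that $G$ is the complete $r$-uniform hypergraph $K_n^{(r)}$, and that the conclusion is asserted for $n \ge n(t_1,\ldots,t_k)$) is the correct way to repair the slight imprecision in the paper's statement, which as written omits the condition relating $n$ to $n(t_1,\ldots,t_k)$ and would be false for a non-complete $G$. The bookkeeping you defer is indeed routine: set $M = 1 + \sum_j (t_j-1)$, let the target size of $S_M$ be arbitrary (say $r-1$), and define the target size of $S_{i-1}$ backward by $|S_{i-1}| \ge 1 + R^{(r-1)}_k\bigl(|S_i|,\ldots,|S_i|\bigr)$, which is finite by the inductive hypothesis; one minor point worth observing is that $\chi'(v_M)$ is never actually used in the key observation (since $v_M$ is never the minimum index in an $r$-subset), so the final Ramsey extraction step may be skipped at $i=M$.
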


\subsection{Uniform matroids}
In the previous section we proved at length that various matroids, including all
uniform matroids, are
contained in a.a.a.\ sparse paving matroids.However the result for uniform matroids
can also be achieved
almost immediately using Ramsey theory.

\begin{theorem}
\label{uniframseysp}
Asymptotically almost all sparse paving matroids contain $U_{t,k}$ as a minor.
\end{theorem}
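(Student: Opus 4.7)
The plan is to apply Ramsey's theorem after first contracting away most of the rank, so that the relevant hypergraph has bounded uniformity. Specifically, I would fix an arbitrary independent set $A \subseteq [n]$ of size $r(M)-t$ (for example, the first $r(M)-t$ elements), which is guaranteed to be independent since every proper subset of an $r$-set is independent in a rank-$r$ sparse paving matroid. Then $M/A$ is a rank-$t$ matroid on $n-r(M)+t$ elements, whose non-bases are exactly the $t$-sets $B \subseteq [n]\setminus A$ with $A \cup B$ a non-basis of $M$.

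Next I would view the collection of non-bases of $M/A$ as a $t$-uniform hypergraph on $[n]\setminus A$ and 2-colour $([n]\setminus A)^{(t)}$ by "edge / non-edge". By Ramsey's theorem for $t$-uniform hypergraphs, once $|[n]\setminus A|=n-r(M)+t$ exceeds the Ramsey number $R_t(k,k)$ we are guaranteed a $k$-set $S$ which is monochromatic. If $S$ is a "non-edge" clique, then no $t$-subset $B \subseteq S$ has $A \cup B$ a non-basis of $M$, so $(M/A)|S \cong U_{t,k}$ and we are done.

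The remaining case is that $S$ is an "edge" clique, i.e., every $t$-subset of $S$ is a non-basis of $M/A$. I would rule this out using the sparse paving property: as soon as $k \ge t+1$, we can choose $B_1,B_2 \in S^{(t)}$ with $|B_1 \cap B_2| = t-1$, giving non-bases $A\cup B_1$ and $A \cup B_2$ of $M$ that share $r(M)-1$ elements, contradicting the fact that the non-bases of a sparse paving matroid form a stable set in $J(n,r(M))$. So the "edge" case cannot occur and we obtain $U_{t,k}$ as a minor of $M$ deterministically, as long as the rank of $M$ sits in a suitable window.

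Finally I would invoke Lemma~\ref{aaaranges} with a small $\delta$ to ensure that for asymptotically almost all sparse paving matroids $M \in \mathbb{S}_n$, the rank $r(M)$ satisfies $t \le r(M) \le n - R_t(k,k) - t$, so that $|A|=r(M)-t \ge 0$ and $n-r(M)+t \ge R_t(k,k)$ both hold. This is automatic for large $n$ since $t$ and $R_t(k,k)$ are fixed constants while $r(M) \in ((\tfrac12 -\delta)n,(\tfrac12+\delta)n)$ a.a.a. There is no real obstacle here; the only thing to be careful about is that Ramsey's theorem is applied at a fixed uniformity $t$ (after contracting), since otherwise we would need the astronomical $r$-uniform Ramsey number for $r \approx n/2$, which is far larger than $n$.
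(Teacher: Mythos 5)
Your proof is correct and follows essentially the same approach as the paper: contract by an $(r-t)$-set, apply Ramsey's theorem to the $t$-uniform hypergraph of non-bases of $M/A$, rule out the clique case via sparse pavingness, and invoke Lemma~\ref{aaaranges} to control the rank. You are right to insist on the fixed uniformity $t$ after contraction; the paper writes the required bound as $n > 4R_r(r+1,k)$, which reads as a typo for $R_t(t+1,k)$ (using $R_r$ with $r\approx n/2$ would indeed be useless, exactly as you observe).
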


\begin{proof}
Consider $\mathbb{S}_{n,r}$, the collection of sparse paving matroids of rank $r$ on
$n$
elements. Let $M \in \mathbb{S}_{n,r}$ and consider $M/A$, where $A = \{1, 2, \ldots,
(r-t)\}$. We note that with high probability $r < 3n/4$ (due to Lemma~\ref{aaaranges})
and we may consider only these cases. Hence we can assume that $M/A$ has at least
$n/4$ elements. Now we are done if, on some set of $k$ elements,
$M/A$ contains no non-bases. But now the set $\mathcal{C}(M)$ of
non-bases of $M/A$ is a hypergraph, and hence by Ramsey if $n/4$ is
sufficiently large (specifically, $n > 4R_r(r+1,k)$) then $\mathcal{C}(M)$ must
contain either a clique $K_{r+1}^{(r)}$ or stable set $E_k^{(r)}$. The former case
is impossible, as the elements of the clique would form a hyperplane of cardinality
$r+1$ in $M/A$, contradicting sparse pavingness. So there is some set $B$ of $k$
elements on which $\mathcal{C}(M)$ is empty. Then $(M/A)|B \cong U_{t,k}$.
\end{proof}

We'll now introduce an area which enables us to prove a minor-inclusion result for a
substantial class of matroids, albeit in the limited arena of fixed-rank sparse
paving matroids. The value of the result is perhaps less than the interest of the
Ramsey-theoretical methods used to obtain it.

\subsection{Loose elements and tied non-bases}

\begin{defn}[Loose element, Tied non-basis]
Let $M$ be a sparse paving matroid. An element $x$ in a non-basis $C$ of
$M$ is called \textit{loose} if no other non-basis of $M$ contains $x$. Moreover, a
non-basis $C$ is called \textit{tied} if it contains no loose elements.
\end{defn}

The main theorem of this chapter is the following.\\

\begin{theorem}
\label{aaaloose}
Let $H$ be a sparse paving matroid with no tied non-bases. Then asymptotically almost
all matroids of rank $r_H$ contain $H$ as a minor.
\end{theorem}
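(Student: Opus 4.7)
My approach is to apply Lemma \ref{bigthmfixrank} (working within the class $\mathbb{S}_{n, r_H}$, consistent with the rest of the paper), which reduces the theorem to the claim that for every fixed $m$, asymptotically almost all $M \in \mathbb{S}_{n, r_H}$ contain $m$ pairwise element-disjoint copies of $L(H)$ as subhypergraphs of $\mathcal{C}(M)$.

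To find an individual copy, write the non-bases of $H$ as $C_i = D_i \cup \{x_i\}$, where $x_i$ is the loose element of $C_i$ (which exists because $H$ has no tied non-bases). An embedding of $L(H)$ into $\mathcal{C}(M)$ is equivalent to choosing an injection $\phi \colon V(H) \setminus \{x_1, \ldots, x_s\} \to [n]$ together with distinct $y_1, \ldots, y_s \in [n] \setminus \mathrm{im}(\phi)$ such that $\phi(D_i) \cup \{y_i\} \in \mathcal{C}(M)$ for every $i$. The key structural feature of the sparse paving setting is that any $(r_H - 1)$-set lies in at most one non-basis, so the completing elements $y_i$ are uniquely determined by the shadows $\phi(D_i)$ and are automatically pairwise distinct whenever the $\phi(D_i)$ are. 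The task therefore reduces to embedding the fixed $(r_H - 1)$-uniform hypergraph $\{D_1, \ldots, D_s\}$ into the shadow set $\partial \mathcal{C}(M) \subseteq [n]^{(r_H - 1)}$ in such a way that the forced completions lie outside $\mathrm{im}(\phi)$.

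Corollary \ref{extcor1}(a) supplies the needed density: a.a.a.\ $M \in \mathbb{S}_{n, r_H}$ has $|\mathcal{C}(M)| \ge (1 + o(1)) \frac{1}{4n}\binom{n}{r_H}$, so $|\partial \mathcal{C}(M)| = r_H |\mathcal{C}(M)|$ occupies a positive fraction of $[n]^{(r_H - 1)}$. I would then apply the hypergraph Ramsey theorem twice: first, $2$-colour $[n]^{(r_H - 1)}$ by membership in $\partial \mathcal{C}(M)$, extracting a large monochromatic $T \subseteq [n]$ (in the ``no shadow'' subcase one gets $M|T \cong U_{r_H, |T|}$ and iterates on the sparse paving matroid $M \setminus T$, which still satisfies Corollary \ref{extcor1}(a)); second, within a successful ``all shadow'' $T$, $2$-colour $T^{(r_H - 1)}$ by whether the unique completion $y(D)$ lies in $T$ or outside, extracting $T' \subseteq T$ on which all completions lie outside. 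Any injection $V(H) \setminus \{x_1, \ldots, x_s\} \hookrightarrow T'$ then lifts to a copy of $L(H)$ in $\mathcal{C}(M)$, and $m$ element-disjoint copies are produced by peeling off blocks of $n_H$ elements and repeating the construction on the remainder, which remains sparse paving of rank $r_H$ and inherits the density bound.

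I expect the main obstacle to be controlling the Ramsey case analysis quantitatively. In particular, the ``all completions inside $T$'' subcase of the second Ramsey step would force $M|T$ to approach the Steiner-system density of Lemma \ref{maxstablelem}; combining this with the global non-basis count from Corollary \ref{extcor1}(a) should eventually force a contradiction once $|T|$ is large enough, but the effective bookkeeping (and the interaction with the iteration in the first Ramsey step) requires care. The ``no tied non-bases'' hypothesis enters decisively here: it is precisely this condition that allows each non-basis of $L(H)$ to be recovered from its $(r_H - 1)$-shadow together with a single loose completion, so that the problem of embedding $L(H)$ cleanly reduces to that of embedding the fixed sub-hypergraph $\{D_1, \ldots, D_s\}$ into a dense shadow set.
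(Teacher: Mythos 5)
Your two Ramsey steps (first on $(r_H-1)$-sets coloured by shadow membership, then on the shadow-clique coloured by where the unique completion lands) closely parallel the paper's use of forts, and in the ``all-shadow, completions outside'' branch you are essentially reconstructing the fort argument of Lemmas~\ref{antiram} and~\ref{fortlemma}. The decisive gap is in the other branch. When the first Ramsey step produces a ``no shadow'' set $T$, you have found what the paper calls an \emph{empty moat}, and you propose to discard it and iterate, asserting that $M\setminus T$ ``still satisfies Corollary~\ref{extcor1}(a)''. But Corollary~\ref{extcor1}(a) is a probabilistic statement about a matroid drawn uniformly at random; it confers nothing on the specific conditioned matroid $M\setminus T$, and even read charitably as ``the density bound is approximately preserved'' it gives no reason that the iteration ever escapes the moat case. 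The paper does not claim -- and does not attempt to show -- that a.a.a.\ dense sparse paving matroids of rank $r_H$ contain many copies of $L(H)$; instead it proves a genuine dichotomy: by carving $[n]$ into many blocks and applying Ramsey within each, one finds either many disjoint copies of $L(H)$ (fed into Lemma~\ref{bigthmfixrank}) or many disjoint empty ($H$-good) moats (fed into Lemma~\ref{hgoodlemma}). Your reduction collapses this to the first branch only, and without Lemma~\ref{hgoodlemma} or a substitute there is no way to handle a matroid whose groundset is riddled with moats. This is the central missing idea, and note that the paper's proof of Theorem~\ref{aaaloose} is purely Ramsey-theoretic and never invokes the density bound of Corollary~\ref{extcor1} at all.

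There are two secondary issues. First, the claim that the completions $y_i$ are ``automatically pairwise distinct whenever the $\phi(D_i)$ are'' is false: if $|D_i\cap D_j|\le r_H-3$ then the distinct $(r_H-1)$-sets $\phi(D_i)$ and $\phi(D_j)$ could complete to non-bases of $M$ sharing the element $y_i=y_j$ (their union would meet in at most $r_H-2$ elements, which is permitted), so injectivity of the embedding is not free and needs another colouring step or an averaging argument. Second, the contradiction you anticipate from the ``all completions inside $T$'' subcase is not actually forthcoming: Ramsey on the colouring by $y(D)\in T$ versus $y(D)\notin T$ yields a monochromatic $T'\subseteq T$ whose completions lie in $T$ but not necessarily in $T'$, so this does not make $M|T'$ a Steiner system and there is no density conflict to exploit; the paper avoids this entirely by applying a second ($r_H$-uniform) Ramsey step inside the shadow-clique to kill internal non-bases, then quoting the loose-element hypothesis via Lemma~\ref{fortlemma} to pin down a copy of $L(H)$.
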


The proof will use Ramsey theory to establish that all sparse paving matroids of
sufficient size and rank $r$ must fall into at least one of two categories, within
each of which
asymptotically almost all matroids contain $H$ as a minor. One category is those
matroids which contains (as a subgraph) sufficiently many copies of $L(H)$, which we
show by demonstrating the inclusion of a structure which we shall call a
\textit{fort}. The other is matroids some contraction of which contains
sufficiently many copies of a structure which we shall call a \textit{moat}. We shall
first define those structures.

\begin{defn}[Fort]
Let $M$ be a sparse paving matroid of rank $r$. A collection of elements $X$ forms a
\textit{fort} of $M$ if for every $A \in X^{(r-1)}, \exists C \in \mathcal{C}(M): C
= A \cup e, e \not\in X$.
\end{defn}

\begin{defn}[Moat]
Let $M$ be a sparse paving matroid of rank $r$. A collection of elements $X$ forms a
\textit{moat} of $M$ if $M$ contains no non-bases whose elements intersect
with $X$ in $r-1$ elements, in other words if $C \in \mathcal{C}(M), |C| = r
\implies |X \cap C| \neq r-1$.  
\end{defn}

We note that forts and moats are in some sense inverse to one another - in one case,
we have the maximum possible number of  circuit hyperplanes of form $A \cup e, e
\not\in X$; in the other, the minimum possible number. In either case, the structure
provides a controlled space in which to demonstrate the existence (or in the case of
moats, probable existence) of minors. 

We'll first address the case of moats. The usefulness of moats is that if we can fix
some moat and everything outside the
moat, then we can allow any legitimate structure of non-bases within the
moat (i.e\.
those contained entirely within the moat $X$) and the whole structure will remain
sparse paving. However in this instance we will want to make further restrictions on
the interior of the moat.

\begin{defn}[Empty moat]
Let $M$ and $H$ be sparse paving matroids of rank $r$. We say a moat $X$ of $M$ is
\textit{empty} if there are no non-bases in the interior of the moat, that is to say
$X$ contains no non-bases of $M$. We also shall say $X$ is \textit{$H$-good} if the
non-bases of $M$ contained in $X$ are isomorphic to a subhypergraph of the line
structure of $H$. 
\end{defn}
We note that an empty moat $X$ is $H$-good for any choice of $H$.

\begin{lemma}
\label{hgoodlemma}
Let $H$ be a sparse paving matroid with $n_H$ elements and rank $r$. 
We let $\mathbb{S}_{n,r,m,H}$ denote the members of $\mathbb{S}_{n,r}$ which contain
at least $m$ pairwise disjoint $H$-good moats of size $n_H$.
Let $\epsilon > 0$. Then there exists an integer $m(\epsilon)$ such that for large
enough $n$ the following holds: let $M$ be drawn randomly from
$\mathbb{S}_{n,r,m(\epsilon),H}$. With probability at least $1-\epsilon$, $M$
contains $H$ as a minor.
\end{lemma}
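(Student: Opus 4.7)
The plan is to mimic the parent--child counting argument of Theorem~\ref{bigthm2}, adapted to the moat setting. Fix a total ordering $>^*$ on $m$-tuples of pairwise disjoint $n_H$-subsets of $[n]$, together with canonical bijections from each such subset to $[n_H]$ (and hence canonical labelled copies of $L(H)$). For $M\in\mathbb{S}_{n,r,m,H}$, let $X(M)=(X_1,\dots,X_m)$ be the $>^*$-maximal tuple of disjoint $H$-good moats of $M$, and define the parent $T(M)$ to be the matroid obtained by replacing, in each $X_i$, the non-bases of $M$ contained in $X_i$ with the canonical labelled copy of $L(H)$. The moat property ensures that this is still sparse paving; and because $T(M)$ has (weakly) more non-bases than $M$ while preserving all moat conditions on the $X_i$, the set of $H$-good moat tuples of $T(M)$ is a subset of that of $M$, leaving $X$ still $>^*$-maximal. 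Thus $M\mapsto(T(M),X(M))$ is a consistent parent assignment partitioning $\mathbb{S}_{n,r,m,H}$ into children classes.

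The children of $(T,X)$ are then the matroids $M_F$ obtained from $T$ by substituting each $X_i$'s interior with some labelled sub-$L(H)$ line structure $F_i$ drawn from the finite set $\mathcal{I}$ of such structures on $[n_H]$, subject to the consistency requirement $X(M_F)=X$. The core observation is a substitution lemma: if $F$ is a valid filling, then so is any $F'$ obtained by replacing $F_k$ with one of the $n_H!/|\mathrm{Aut}(L(H))|$ labelled $L(H)$-copies on $X_k$. The reason is that such a substitution only adds non-bases, which (weakly) shrinks the set of $H$-good moat tuples of the matroid and therefore preserves the $>^*$-maximality of $X$. Consequently, conditional on any fixed $F_{-k}$, each of the $L(H)$-copies is a valid value for $F_k$, giving $\Pr(F_k\cong L(H)\mid F_{-k})\ge \pi_H:=(n_H!/|\mathrm{Aut}(L(H))|)/|\mathcal{I}|>0$. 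Iterating the conditional bound across the $m$ moats yields
\[
\Pr(F_k\not\cong L(H)\text{ for all }k \mid T(M)=T,\, X(M)=X)\le (1-\pi_H)^m.
\]

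Whenever some $F_k\cong L(H)$ we have $M|X_k\cong H$, and this restriction is already an $H$-minor since no contraction is needed ($r(M)=r_H$). Setting $m(\epsilon)=\lceil\log(1/\epsilon)/\log(1/(1-\pi_H))\rceil$ then completes the argument. The principal obstacle is establishing the substitution lemma, and in particular the monotonicity observation that adding non-bases inside a moat can only weaken the collection of $H$-good moat tuples of the ambient matroid --- both the moat property and the sub-$L(H)$ interior condition become harder to satisfy. With this monotonicity secured, the conditional-independence chain across the $m$ moats flows through exactly as in the proof of Theorem~\ref{bigthm2}.
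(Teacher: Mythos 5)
Your strategy is the right one---a parent--child counting argument in the spirit of Theorem~\ref{bigthm2}---but the execution has a genuine gap in the parent construction, and this propagates into your substitution lemma.

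You define $T(M)$ by replacing the interior of each $X_i$ with \emph{the} canonical labelled copy of $L(H)$ on $X_i$, independently of what was actually there. The problem is that the non-bases of $M$ contained in $X_i$ need not be a sub-structure of that canonical copy; they are only guaranteed to be isomorphic to \emph{some} subhypergraph of $L(H)$, and the embedding need not be the canonical one. Consequently $\mathcal{C}(T(M)) \not\supseteq \mathcal{C}(M)$ in general: the replacement both adds and removes non-bases. Your justification for $X(T(M)) = X(M)$---namely that adding non-bases only shrinks the set of $H$-good moat tuples---therefore does not apply, since this is not an augmentation. Concretely, a non-basis of $M$ inside some $X_j$ could block some $n_H$-set $Y$ from being a moat, while the canonical $L(H)$ copy you substitute does not block $Y$; then $T(M)$ acquires a new $H$-good moat, $X$ may cease to be $>^*$-maximal in $T(M)$, and the parent map fails to partition $\mathbb{S}_{n,r,m,H}$. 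The same issue sinks the substitution lemma: replacing $F_k$ by an arbitrary labelled $L(H)$-copy is not a monotone operation unless that copy \emph{contains} $F_k$, so ``any of the $n_H!/|\mathrm{Aut}(L(H))|$ labelled copies works'' is unjustified, and your claimed $\pi_H$ is not a valid lower bound on the conditional probability.

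The paper avoids both problems by choosing, for each moat $V_k$, a copy $A_k$ of $L(H)$ on $V_k$ that \emph{contains} $\mathcal{C}(M|V_k)$ (and is maximal in a fixed ordering to make this choice canonical). Then $T$ genuinely has $\mathcal{C}(T) \supseteq \mathcal{C}(M)$, the monotonicity argument is legitimate, children of $T$ are obtained by \emph{removing} a subset of the lines of each $A_k$, and the per-moat success probability is at least $1/2^{|L(H)|}$ (the empty removal is always among the at most $2^{|L(H)|}$ admissible choices for $U_k$). If you change your parent to use a containing copy of $L(H)$ rather than a fixed canonical one, and restrict your ``fillings'' of $X_k$ to sub-structures of that specific copy, your argument recovers the paper's proof.
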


The proof of this is very similar to Theorem~\ref{bigthm2}. The main difference is
that we are now demanding that our moats contained a subhypergraph of $L(H)$,
whereas in Theorem~\ref{bigthm2} we required superhypergraphs of $L(H)$.

\begin{proof}

Assume w.l.o.g.\ that $\epsilon < 1$.

We aim to assign $M$ a parent matroid $T \in
\mathbb{S}_{n,r,m,H}$, so that every choice of $M$ has precisely one such parent. We
show then that for any parent, at most proportion $\epsilon$ of its children are
$H$-free. The same will then be true for the union of offspring of all parents,
which is of course equal to the entirety of $\mathbb{S}_{n,r,m,H}$.

Assume w.l.o.g.\ that our groundset is $[n]$. Take a lexicographical ordering across
all the $n_H$-subsets of $[n]$. Now let  $\mathcal{U}(n,m,H)$ be the set of all
possible collections of $m$ or more pairwise disjoint $n_H$-subsets of $[n]$. Now
let $*(n,m,H)$ be the lexicographical ordering on  $\mathcal{U}(n,m,H)$ induced by
our lexicographical ordering on the $n_H$-subsets . Say that $V >^* W$ if $V$ is
above $W$ in this ordering.

Note that in fact any total orderings would do here, but as in our earlier proof it
is most intuitive to imagine a lexicographical ordering.

Now for any $U = \{U_0, \ldots, U_{m-1}\}\in \mathcal{U}(n,m,H)$, and $0 \le k \le
m-1$, let us define $\mathcal{T}_{U}$ to be the collection of
all stable sets $T \in J(n,r)$ such that in $M(T)$ we have: \\

\begin{itemize}
\item[(i)]$U$ forming a maximal collection of element disjoint $H$-good moats under
$>^*$
\item[(ii)]For all $0 \le k \le m$, $ U_k$ containing a copy of $L(H)$
\end{itemize}

Note each $U_k$  must contain only a copy of $L(H)$, since by (i) it is an $H$-good
moat.

One last consideration: let's also establish a total ordering over all possible
copies of $L(H)$ (for example taking the lexicographical ordering induced by the
lexicographical ordering on $r$-subsets of $[n]$.

Now, we have at least one collections of $m$ element disjoint $H$-good moats in
$M$, and let's say that $V(M) = \{V_0, \ldots, V_{m-1}\}$ is the maximal such
collection under our ordering
relation $>^*$. The parent of $M$, which we may denote $T(M)$, shall be $T \in
\mathcal{T}_{V}: \mathcal{C}(T) =
\mathcal{C}(M) \cup A_0 \cup \ldots \cup A_{m-1}$, with $A_k$ the maximal copy of
$L(H)$ in $V_k$
such that $\mathcal{C}(M|V_k) \subseteq A_k$.

Let $X_k$ be the event that the restriction of $M$ to $V_k$
forms a $H$-minor, and $Y_i$ be the converse. Further, let $Z_k = Y_0 \cup \ldots
\cup Y_k$. Note that in the event $M$ is $H$-free,  $Z_{m-1}$ is implied. So 

We aim to show that

\begin{equation}
\label{Je5s}
\mathrm{Pr}(Z_{m-1}) \le \left(\frac{2^{|L(H)|}-1}{2^{|L(H)|}}\right)^m
\end{equation}

Having shown this we will be done:  for any $\epsilon > 0$ we can achieve
$\mathrm{Pr}(M \not\succeq H) <
\epsilon$. We simply need $m > -\log \epsilon/(\log(2^{|L(H)|}-1) - \log
(2^{|L(H)|}))$
and this provides our value for $m(\epsilon)$.

To prove \ref{Je5s} it suffices to show that, for all $0 \le k \le m-1$, 

$$\mathrm{Pr}(X_k|Z_{k-1}) \ge
\frac{1}{2^{|L(H)|}}$$

For any $U$ and any $T \in \mathcal{T}_{U}$, the children of $T$ are contained in
the collection of sparse paving matroids which can be formed by taking $T$ and
removing lines contained in any of $\{U_0, \ldots, U_{m-1}\}$. Depending on the
choice of lines to remove, we may or may not be left with a child of $T$. Now
consider making any choice of lines to remove from $\{U_0, \ldots, U_{k-1}, U_{k+1},
\ldots, U_m \}$. Suppose that having done so we are still able to find some choice
of lines to remove from $U_k$ such that we are left with a child of $T$. Then
certainly removing no lines from $U_k$ also leaves us with a child of $T$, and
moreover one which contains an $H$-minor! This accounts for one of a maximum of
$2^{|L(H)|}$ choices of lines that can be removed from $U_k$ to leave a child of
$T$. Since this holds regardless of our choice of lines removed from the other
$U_i$, we can immediately say that

$$\mathrm{Pr}(X_k|T(M)=T) \ge \frac{1}{2^{|L(H)|}}$$

But we may apply exactly the same argument even if we add the restriction that our
choice of lines to be removed contain at least one line from each of $U_0, \ldots,
U_{k-1}$. And so 

$$\mathrm{Pr}(X_k|Z_{k-1},T(M)=T) \ge \frac{1}{2^{|L(H)|}}$$

And now, writing $\mathcal{U}$ for $\mathcal{U}(n,m,H)$, we have

$$\mathrm{Pr}(X_k|Z_{k-1}) = \sum\limits_{U \in \mathcal{U}}\sum\limits_{T
\in\mathcal{T}_U}\mathrm{Pr}(X_k|Z_{k-1},T(M)=T)\mathrm{Pr}(T_(M)=T|Z_{k-1})  $$

$$ \ge \sum\limits_{U \in \mathcal{U}}\sum\limits_{T
\in\mathcal{T}_U}\frac{1}{2^{|L(H)|}}\mathrm{Pr}(T(M)=T|Z_{k-1}) $$
$$ \ge \frac{1}{2^{|L(H)|}}\sum\limits_{U \in \mathcal{U}}\sum\limits_{T \in
\mathcal{T}_U}\mathrm{Pr}(T(M)=T|Z_{k-1}) =\frac{1}{2^{|L(H)|}}$$

\end{proof}

Note that though we have chosen only to prove the result for sparse paving matroids,
we believe the same result could be proven for general matroids in essentially the
same way.

Now we want to deal with forts. Recall our earlier definition: let $M$ be a sparse
paving matroid of rank $r$. A collection of elements $X$ forms a
\textit{fort} of $M$ if for every $A \in X^{(r-1)}, \exists C \in \mathcal{C}(M): C
= A \cup e, e \not\in X$.

We want to show the following. \\

\begin{lemma}
\label{fortlemma}
Suppose $X$ is a fort of size $n$ in a sparse paving matroid $M$. For any $A
\subseteq X$, Let $\mathcal{C}_A$ be the non-bases of $M$ which contain $r-1$
elements of $A$. Then for any $m \ge r$, there exists $n(m,r)$ such that $n > n(m,r)$
implies that there exists $X' \subset X, |X| \ge m$, such that no two elements of
$\mathcal(C)_{X'}$  intersect outside of $X'$.
\end{lemma}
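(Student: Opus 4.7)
My plan is to reduce the conclusion to the injectivity of a naturally associated function and then apply the Erd\H{o}s--Rado canonical Ramsey theorem, ruling out every canonical type except the rainbow one by means of sparse paving. The first step is to set up the right object. By the fort property together with the sparse paving axiom that two distinct non-bases share at most $r-2$ elements, every $A \in X^{(r-1)}$ lies in a unique non-basis of $M$, of the form $A \cup \{f(A)\}$ with $f(A) \in [n] \setminus X$; this defines a function $f \colon X^{(r-1)} \to [n] \setminus X$. Unpacking the target condition on $\mathcal{C}_{X'}$, one checks that the only way two of its members can meet outside $X'$ is via a collision $f(A_1) = f(A_2)$ with $A_1 \ne A_2$ in ${X'}^{(r-1)}$, so the lemma reduces to finding $X' \subseteq X$ with $|X'| \ge m$ on which $f$ is injective. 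Sparse paving also furnishes a rigidity bound on any collision: if $f(A_1) = f(A_2) = e$ with $A_1 \ne A_2$, then $|(A_1 \cup e) \cap (A_2 \cup e)| = |A_1 \cap A_2| + 1 \le r - 2$, forcing $|A_1 \cap A_2| \le r - 3$.

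I then treat $f$ as a colouring of $X^{(r-1)}$ with an unboundedly large palette and invoke the Erd\H{o}s--Rado canonical Ramsey theorem, taking $n(m, r)$ to be the corresponding canonical Ramsey number. This supplies a subset $X' \subseteq X$ with $|X'| = m$ together with a distinguished $I \subseteq [r-1]$ such that, when $X'$ is ordered as $z_1 < z_2 < \ldots < z_m$, one has $f(A) = f(B)$ if and only if the $i$-th smallest elements of $A$ and $B$ coincide for every $i \in I$. The canonical type $I = [r-1]$ is exactly the statement that $f$ is injective on ${X'}^{(r-1)}$, so it suffices to rule out every proper $I$.

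Given $I \subsetneq [r-1]$, I would choose any $p \in [r-1] \setminus I$ and, working inside the initial segment $\{z_1, \ldots, z_r\}$ (available since $m \ge r$), take
\[
A_1 = \{z_1, \ldots, z_r\} \setminus \{z_p\}, \qquad A_2 = \{z_1, \ldots, z_r\} \setminus \{z_{p+1}\}.
\]
A direct comparison against the induced order shows $A_1$ and $A_2$ agree at every position except $p$, so $A_1|_I = A_2|_I$; canonicity then forces $f(A_1) = f(A_2)$, while $|A_1 \cap A_2| = r - 2$ contradicts the rigidity bound above. Hence $I = [r-1]$, as required.

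The main obstacle is identifying the appropriate tool, since the effective palette of $f$ can be huge and ordinary Ramsey is therefore insufficient; the canonical version of Ramsey is precisely what is needed. Once that is in hand, the uniform ``swap one position'' construction above dispatches every proper $I$ cleanly, and the hypothesis $m \ge r$ is exactly the threshold at which this construction becomes realisable.
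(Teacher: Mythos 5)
Your reduction is correct and matches the paper's setup exactly: the fort property plus sparse paving means each $A \in X^{(r-1)}$ lies in a unique non-basis $A \cup \{f(A)\}$ with $f(A) \notin X$, and the required conclusion on $\mathcal{C}_{X'}$ is equivalent to $f$ being injective on ${X'}^{(r-1)}$. Your ``rigidity bound'' $|A_1 \cap A_2| \le r-3$ when $f(A_1)=f(A_2)$ is, since distinct $(r-1)$-sets already have intersection at most $r-2$, the same thing as the local-rainbow hypothesis that adjacent $(r-1)$-sets in the Johnson graph receive different colours.

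Where you diverge from the paper is in the Ramsey-theoretic engine. The paper does not invoke the Erd\H{o}s--Rado canonical Ramsey theorem; it proves a bespoke anti-Ramsey statement (Lemma~\ref{antiram}) from scratch by a doubly-indexed induction on $(m,r)$: one builds up a set $X_{(a)}$ on which any two intersecting hyperedges already receive different colours, extracts a rainbow $(m-1)$-set, and then extends it by one element via a counting argument. That lemma is tailored exactly to the local-rainbow hypothesis and concludes a rainbow set directly. You instead call canonical Ramsey as a black box, obtaining a canonical pattern $I \subseteq [r-1]$, and then rule out every proper $I$ by the clean one-position swap $A_1 = \{z_1,\ldots,z_r\} \setminus \{z_p\}$, $A_2 = \{z_1,\ldots,z_r\} \setminus \{z_{p+1}\}$, which forces a collision on two sets intersecting in $r-2$ elements; this is correct and the hypothesis $m\ge r$ is precisely what it needs. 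Your route is shorter and leans on a standard theorem; the paper's route is self-contained and avoids proving canonicity for the non-rainbow types it never uses. Both give tower-type bounds on $n(m,r)$, so there is no quantitative advantage either way. One small point of hygiene: you should state explicitly that your $n(m,r)$ is the Erd\H{o}s--Rado number for $(r-1)$-uniform colourings with target size $m$, since the lemma is applied at uniformity $r-1$ rather than $r$.
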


That is to say that given an arbitrarily large fort in a sparse paving matroid $M$
of rank $r$, we can find an arbitrarily large subset $X'$ of that fort on which each
$(r-1)$-sets of $X'$ can be matched with a unique element of $M \ X'$ to form a
non-basis of $M$.

First we need to prove a result that is in some sense anti-Ramsey, that is to say we
want to demonstrate the existence of \textit{polychromatic} cliques in a hypergraph,
given sufficient size and one simple condition.

\begin{defn}[Polychromatic]
Let $G$ be an $r$-uniform hypergraph and $c: E(G) \rightarrow \mathbb{N}$ be a
colouring of the hyperedges of $G$. A subhypergraph $H$ is \textit{polychromatic} if
$A, B \in E(H) \implies c(A) \neq c(B)$.
\end{defn} 

\begin{lemma}
\label{antiram}
Let X be a set of $n$ elements. Say we have a function $c: X^{(r)} \rightarrow
\mathbb{N}$ such that $A, B \in X^{(r)}, |A \cap B| = r -1 \implies c(A) \neq c(B)$.
Then for any $m \ge r$, there exists $n(m,r)$ such that $n > n(m,r) \implies \exists
X' \subset X, |X'| = m, A, B \in X'^{(r)} \implies c(A) \neq c(B)$.
\end{lemma}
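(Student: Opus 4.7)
The plan is to invoke the Erd\H{o}s--Rado canonical Ramsey theorem to find a large subset $Y \subseteq X$ on which $c$ takes a ``canonical'' form, and then use the hypothesis on $c$ to rule out every canonical form except the rainbow one, so that $c$ is in fact injective on $Y^{(r)}$.

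Concretely, after fixing a total order on $X$, I would first apply canonical Ramsey to $c$: for $|X|$ sufficiently large in terms of $m$ and $r$, this yields a subset $Y \subseteq X$ with $|Y| = m$ and an index set $I \subseteq \{1,\ldots,r\}$ such that, writing each $r$-subset of $Y$ as $A = \{a_1 < \cdots < a_r\}$, we have $c(A) = c(B)$ if and only if $a_i = b_i$ for every $i \in I$. The case $I = \{1,\ldots,r\}$ is precisely the conclusion of the lemma, so it suffices to show the hypothesis on $c$ forces $I = \{1,\ldots,r\}$. For this I would argue by contradiction: suppose some $k \in \{1,\ldots,r\}$ is missing from $I$. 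The case $m = r$ is vacuous, so assume $m \ge r+1$ and take any $r+1$ elements $y_1 < \cdots < y_{r+1}$ of $Y$. Setting $A = \{y_1,\ldots,y_{r+1}\} \setminus \{y_{k+1}\}$ and $B = \{y_1,\ldots,y_{r+1}\} \setminus \{y_k\}$, a direct check shows that $A$ and $B$ agree at every position except $k$, and in particular at every position in $I$; the canonical property then gives $c(A) = c(B)$, while $|A \cap B| = r-1$ contradicts the hypothesis. Hence $I = \{1,\ldots,r\}$ and $X' := Y$ is the required set.

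The main obstacle is that the canonical Ramsey theorem is not among the results cited in the paper; one has either to reference it or to prove it in passing. The latter is possible, at the cost of a much worse bound on $n(m,r)$, by applying the standard hypergraph Ramsey theorem (already quoted in the paper) to $X^{(2r)}$ with each $2r$-subset coloured by the full table recording which pairs of its $r$-subsets share a $c$-colour; this is a colouring with finitely many colours, so Ramsey yields a large uniform subset on which essentially the same rainbow-forcing argument runs.
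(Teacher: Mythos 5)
Your proof is correct, and it takes a genuinely different route from the paper's. The paper proves the lemma from scratch by transfinite induction on $(m,r)$: one iteratively fixes elements $x_1, x_2, \ldots$ and applies the rank-$(r-1)$ inductive hypothesis to the link coloring at each fixed element, building up a set $X_{(a)}$ on which any two intersecting $r$-sets are differently coloured, then applies the $(m-1,r)$ case inside $X_{(a)}$ and extends the resulting rainbow $(m-1)$-set by one element via a counting/pigeonhole argument. You instead invoke the Erd\H{o}s--Rado canonical Ramsey theorem to obtain an $m$-set $Y$ and an index set $I \subseteq \{1,\ldots,r\}$ realising the canonical pattern, and then observe that the Johnson-graph-type hypothesis on $c$ kills every canonical pattern except $I = \{1,\ldots,r\}$: if $k \notin I$, the two $r$-subsets of an $(r+1)$-set obtained by dropping the $k$-th or $(k+1)$-th element agree in all positions except the $k$-th, hence at every position of $I$, so the canonical property forces them to share a colour even though they meet in $r-1$ elements --- a contradiction. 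Your index bookkeeping is correct, and your treatment of the trivial $m=r$ case is fine. The trade-off is exactly as you describe: your argument is much shorter and conceptually cleaner, at the cost of importing canonical Ramsey, which is not among the paper's cited results; the paper's induction is self-contained. Your suggestion that canonical Ramsey can itself be derived from the ordinary hypergraph Ramsey theorem (by colouring $2r$-sets according to the agreement pattern of $c$ on their $r$-subsets and applying Ramsey to that finitary colouring) is also correct and, since the hypergraph Ramsey theorem is already quoted in this section, would bring your proof fully within the paper's toolbox. Both arguments yield only tower-type bounds on $n(m,r)$, so nothing quantitative is gained or lost.
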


We note that our condition is equivalent to specifying that $c$ is a vertex-colouring
of the Johnson graph $J(n,r)$. However it makes more sense for us to view $c$ as a
hyperedge-colouring of a $r$-uniform hypergraph.

\begin{proof}
We prove this by (transfinite) induction on $m$ and $r$, ordering on $r$ first. Thus
we say $(k,s) < (m,r)$ if either$s < r$, or $s = r$ and  $k <m$. We take as our
inductive hypothesis that $n(k,s)$ exists for all $(k,s) < (m,r)$. 
We can see easily enough that $n(k,1) = k$ and $n(1,r) = r$, which covers all base
and limit cases in our ordering. Next we use our inductive hypothesis and proceed to
demonstrate the existence of $n(m,r)$ in the case $m, r \neq 0$.

First we choose some element $x_1 \in X$. We now consider hyperedges containing
$x_1$. Since each of these is the union of $x_1$ with $r-1$ other elements, we can
use the existence of $n(k,r-1)$, for all $k$, to guarantee the existence of an
arbitrarily large
set $X_1$ such that $A,B \in X_1^{(r)}, x_1 \in A, B \implies c(A) \neq c(B)$. We
now choose $x_2 \in X_1$ and repeat to produce $X_2$ such that  $A,B \in X_2^{(r)},
\{x_1,x_2\} \subseteq A, B \implies c(A) \neq c(B)$. We can continue in such a way
any eventually build a set $ X_{(a)} = \{x_1, x_2, \ldots, x_{n(m-1,r)}\}$ which now
has
the property that $A, B \in X_{(a)}^{(r)}, A \cap B \neq \emptyset \implies c(A) \neq
c(B)$. We now invoke the existence of $n(m-1,r)$ to fsay there must be some $X_{(b)}
\subseteq X_{(a)}$, $|X_{(b)}| = m-1$ with the
additional property that $A, B \in X_{(b)} \implies c(A) \neq c(B)$. 

We now seek to form $X'$ by extending $X_{(b)}^{(r)}$ within $X_{(a)}$. Let
$\mathcal{A}$
and $\mathcal{B}$ be defined as
$$\mathcal{A} = \{A: A \in X_{(a)}^{(r)}, |A \cap X_{(b)} = r-1\}$$
$$\mathcal{B} = X_{(b)}^{(r)}$$

Now no two of the sets in $\mathcal{A}$ can have the same colour if they share an
element, and so at most $\frac{m-1}{r-1}$ of these have any given colour (else some
element of $X_{(b)}$ would appear in two of them, a contradicition). At most ${m-1
\choose
r}\frac{m-1}{r-1}$ members of $\mathcal{A}$ share a colour with a member of
$\mathcal{B}$. But now if we force $|X_{(a)}| \ge m + {m-1 \choose
r}\frac{m-1}{r-1}$, we have that there must be some element $e \in X_{(a)}
\backslash X_{(b)}$ such
that no member of $\mathcal{A}$ containing $e$ shares a colour with any member of
$\mathcal{B}$. Now setting $X' = X_{(b)} \cup e$ meets our demand that no two
members of $X'^{(r)}$ share a colour.
\end{proof}

Note that this proof does not give a practical bound on $n(m,r)$ - indeed let $M$ be
the required size of $X_{(a)}$, which is equal to $n(m-1,r)$ or $ (m+1) + {m-1
\choose r}\frac{m-1}{r-1}$, whichever is greater. Then we require $n > n^M(M,r)$,
where $n^a(b,c)$ is notation for $n(n(n(n(\ldots(b,c),c)\ldots),c)$ with $a$
iterations.

\begin{proof}[Proof of Lemma~\ref{fortlemma}]
Let $X$ be a fort of size $n$ in a sparse paving matroid of rank $r$. Apply
Lemma~\ref{antiram} on $X^{(r-1)}$ colouring each member by the unique element with
which
it is paired in a non-basis of $M$. So long as $n > n(m,r-1)$ we can find a
suitable $X'$.
\end{proof}

We make one important observation based on Lemma~\ref{fortlemma}, which is that if
setting $m = n_H-|L(H)|$ and finding a $X'$ of cardinality at least $n_H-|L(H)|$,
then necessarily $\mathcal{C}(X')$ contains a copy of $L(H)$.

\begin{proof}[Proof of Theorem~\ref{aaaloose}]
Let $n(m,r)$ be defined as in Lemma~\ref{antiram}.

Let $M$ be a sparse paving matroid of rank $r$ on $n$ elements and allow $n$ to grow
arbitrarily large. We consider the $r-1$-uniform hypergraph on $[n]$ formed by
including a hyperedge if and only if it is contained in a circuit hyperplane of $M$.
Note that a stable set in this hypergraph represents a moat. Moreover if we have a
sufficiently large complete subgraph then, by applying Ramsey to the $r$-uniform
hypergraph on the same vertices in a similar manner to Theorem~\ref{uniframseysp}
then we can find within that complete subgraph a fort of any given size. So, by
using Ramsey at the $r-1$ level we can find either a moat of size $R_r(r+1,n_H)$ or
a fort
of size $n(n_H-|L(H)|,r)$, so long as $n > n_0 $ (where $n_0$ may be no more than
$R_{r-1}(R_r(r+1,n_H),R_r(r+1,n(n_H-|L(H)|,r))$).

Now we imagine that $n$ is truly enormous, say $n = Kn_0$ .Now $[n]$ can be divided
into intervals 

$$[1,n_0],[n_0+1,2n_0], \ldots, [K-1)n_0+1,Kn_0]$$ 

We note that by Ramsey each of these intervals must contain either a fort of
cardinality $n(n_H-|L(H)|,r)$ or a moat of cardinality $R_r(r+1,n_H)$. In the
former case we
have a copy of $L(H)$, by Lemma \ref{fortlemma}. In the latter case we must have
(again by the argument of Theorem \ref{uniframseysp}) some $U_{r,n_H}$ minor within
our moat - and this then forms an \textit{empty} moat (and hence also an $H$-good
moat) on those $n_H$ elements! So by making $K$ large enough we can force that $M$
contains either arbitrarily many
copies of $L(H)$, or contains arbitrarily many $H$-good $n_H$-moats. Then by
applying either Lemma
\ref{bigthmfixrank} or Lemma~\ref{moatlemma} respectively we can see that
asymptotically almost certainly $M$ will contain $H$ as a minor.

\end{proof}

\begin{corollary}
\label{aaalooseallr}
Let $H$ be a sparse paving matroid with no tied non-bases, and $r > r_H$. Then
asymptotically almost
all matroids of rank $r$ contain $H$ as a minor.
\end{corollary}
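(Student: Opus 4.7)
The plan is to deduce the corollary from Theorem \ref{aaaloose} by lifting $H$ to a rank-$r$ sparse paving matroid $H'$ that still has no tied non-bases and has $H$ as a minor, so that the corollary reduces to an application of the theorem at rank $r$ rather than rank $r_H$.

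Specifically, I would let $B = \{b_1, \ldots, b_{r-r_H}\}$ be a set of $r - r_H$ new elements disjoint from $E(H)$, and define $H'$ on $E(H) \cup B$ to be the matroid of rank $r$ whose non-bases are exactly $\{c \cup B : c \in \mathcal{C}(H)\}$. First I would confirm that $H'$ is a bona fide sparse paving matroid via the Johnson-graph stability condition: any two prescribed non-bases agree on $B$, so their symmetric difference equals $|c_1 \triangle c_2| \ge 4$, using that $H$ itself is sparse paving. By the Piff--Welsh correspondence this family of $r$-subsets is realised as the non-bases of a genuine sparse paving matroid $H'$ of rank $r$.

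Next I would verify the two conditions needed to apply Theorem \ref{aaaloose} with $H'$ in place of $H$. On the one hand, $H \preceq H'$: the set $B$ has size less than $r$ and so is independent in $H'$, and contracting by $B$ leaves a rank-$r_H$ matroid on $E(H)$ whose non-bases are exactly $\mathcal{C}(H)$, so $H'/B = H$. On the other hand, $H'$ has no tied non-bases: any non-basis $c \cup B$ inherits a loose element $x$ from the corresponding $c \in \mathcal{C}(H)$, because $x$ is by hypothesis in no other non-basis of $H$ and the other non-bases of $H'$ all take the form $c' \cup B$ with $c' \ne c$, so $x \notin c' \cup B$. (The degenerate cases $|\mathcal{C}(H)| \le 1$ are immediate, since then all elements of the unique non-basis, if any, are automatically loose.)

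Applying Theorem \ref{aaaloose} to $H'$ then gives that asymptotically almost all sparse paving matroids of rank $r$ contain $H'$, and hence $H$, as a minor. The only steps requiring any care are the two verifications about $H'$, but these are essentially bookkeeping, so I do not anticipate a genuine obstacle; the argument requires no modification to the main theorem itself.
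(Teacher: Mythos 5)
Your proposal is correct and takes essentially the same approach as the paper: form $H'$ by adjoining $r - r_H$ fresh elements to the groundset and setting $\mathcal{C}(H') = \{C \cup B : C \in \mathcal{C}(H)\}$, then check $H'$ is sparse paving, has no tied non-bases, and has $H$ as a minor, and apply Theorem~\ref{aaaloose} to $H'$. The only difference is that you spell out the verifications (the Johnson-graph stability check, the contraction $H'/B = H$, the inheritance of loose elements) that the paper leaves implicit.
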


\begin{proof}
It suffices to observe that $H$ is contained as a minor of a matroid of rank $r$
which also has no tied non-bases. To see this, simply form $H'$ by adding the set of
elements $A = \{a_1, a_2, \ldots, a_{r-r_H}\}$ to the groundset of $H'$, and define
$\mathcal{C}(H') = \{C \cup A: C \in \mathcal{C}(H)\}$
\end{proof}

Of course the above is a large amount of work for a relatively modest result. One
hopes however that some of the methods involved, in particular those involving the
moats and forts, might be of use in proving other results.

\section{Acknowledgement}

I'd like to thank my supervisor Dillon Mayhew for the frequent discussion, advice
and suggestions he contributed during the course of this work.

\vfill

\end{document}